  \def\my@tag@font{\normalsize}
  \def\maketag@@@#1{\hbox{\m@th\normalfont\my@tag@font#1}}
  \let\amsmath@eqref\eqref
  \renewcommand\eqref[1]{{\let\my@tag@font\relax\amsmath@eqref{#1}}}
\newcommand{\arxiv}[1]{\href{http://arxiv.org/abs/#1}{\texttt{arXiv:#1}}}
\newcommand{\abs}[1]{\lvert#1\rvert}
\newcommand{\sign}[1]{\textnormal{sign}(#1)}
\newcommand{\mylog}[1]{\textit{l}_{#1}}
\renewcommand{\H}[2]{\textnormal{H}_{#1}(#2)}
\newcommand{\R}{\mathbb R}
\newcommand{\Z}{\mathbb Z}
\newcommand{\N}{\mathbb N}
\newcommand{\K}{{\mathbb K}}
\renewcommand{\S}[2]{\textnormal{S}_{#1}\left(#2\right)}
\newcommand{\eg}{e.g.,\ }
\newcommand{\ie}{i.e.,\ }
\newcommand{\HarmonicSumsP}{\texttt{HarmonicSums}}
\newcommand{\GL}[1]{\textnormal{G}\left(#1\right)}
\theoremstyle{plain}
\newtheorem{theorem}{Theorem}
\newtheorem{lemma}[theorem]{Lemma}
\theoremstyle{definition}
\newtheorem{example}[theorem]{Example}
\theoremstyle{remark}
\title{\bf Discovering and Proving Infinite\\ Pochhammer Sum Identities}
\author{Jakob Ablinger\thanks{Supported by the Austrian Science Fund (FWF) grant SFB F50 (F5009-N15), by the strategic program ``Innovatives O\"O 2020'' by the Upper Austrian Government and by the bilateral project DNTS-Austria 01/3/2017 (WTZ BG
03/2017), funded by Bulgarian National Science Fund and OeAD (Austria).}\\
\small Research Institute for Symbolic Computation\\[-0.8ex]
\small Johannes Kepler University\\[-0.8ex] 
\small Linz, Austria\\
\small\tt jablinge@risc.jku.at
}
\date{
\dateline{February 28, 2019}{}\\
\small Keywords: binomial sums, Pochhammer symbol, holonomic functions, multiple zeta values\\
AMS Subject Classifications: 05A10, 68W30, 11M32
}
\begin{document}

\maketitle

\begin{abstract}
We consider nested sums involving the Pochhammer symbol at infinity and rewrite them in terms of a small set of constants, such as powers of $\pi,$ $\log(2)$ or zeta values. 
In order to perform these simplifications, we view the series as specializations of generating series. 
For these generating series, we derive integral representations in terms of root-valued iterated integrals or directly in terms of cyclotomic harmonic polylogarithms. 
Using substitutions, we express the root-valued iterated integrals as cyclotomic 
harmonic polylogarithms. Finally, by applying known relations among the cyclotomic harmonic polylogarithms, we derive expressions in terms of several constants. 
We provide an algorithimc machinery to prove identities which so far could only be proved using classical hypergeometric approaches.
These methods are implemented in the computer algebra package \HarmonicSumsP.
\end{abstract}


\section{Infinite Nested Pochhammer Sums}
\label{sec:1}
The goal of this article is to find and prove identities of the following form: 
\begin{eqnarray*}
\sum\limits_{n=1}^{\infty }  \frac{\left(\frac{1}{2}\right)_n \sum\limits_{i=1}^n \frac{\sum\limits_{j=1}^i \frac{1}{j^2}}{i}}{(n+1)!}&=&3 \zeta _3,\\
\sum\limits_{n=1}^{\infty }  \frac{\left(\frac{1}{2}\right)_n \sum\limits_{i=1}^n \frac{\sum\limits_{j=1}^i \frac{1}{j}}{i^3}}{(n+1)!}&=&\frac{2 l_2^4}{3}-4 \zeta _3 l_2+\frac{2}{3} \pi ^2 l_2^2+16 p_4-\frac{13 \pi ^4}{180},\\
\sum\limits_{n=1}^{\infty } \frac{\left(\frac{1}{4}\right)_n \sum\limits_{i=1}^n \frac{1}{i^2}}{(n+1)!}&=&\frac{7 \pi ^2}{18}-\frac{16 C}{3}-6 \mylog{2}^2+2 \pi  \mylog{2},\\
\sum\limits_{n=1}^{\infty }  \frac{\left(\frac{1}{3}\right)_n \left(\sum\limits_{i=1}^n \frac{1}{3i+1}-\sum\limits_{i=1}^n \frac{1}{3i+2}\right)}{(n+1)!}&=&\frac{\pi }{\sqrt{3}}-\frac{3}{4}-\frac{\sqrt{3 \pi } \Gamma \left(\frac{5}{6}\right)}{\sqrt[3]{2} \Gamma \left(\frac{1}{3}\right)},\\
\sum _{n=1}^{\infty } \frac{\left(\frac{1}{2}\right)_n \sum\limits_{i=1}^n \frac{\sum\limits_{j=1}^i \frac{1}{2 j+1}}{2 i+1}}{(2n+1)^2 n!}&=&\frac{1}{96} \pi  \left(4 \pi ^2 \mylog{2}-72 \mylog{2}^2+56 \mylog{2}^3-9 \zeta_{3}\right),\\
\sum\limits_{i=1}^{\infty } \frac{ \left(\frac{1}{3}\right)_n\sum\limits_{i=1}^n \frac{\sum\limits_{j=1}^{i} \frac{\sum\limits_{k=1}^{j} \frac{\sum\limits_{l=1}^{k} \frac{\sum\limits_{m=1}^{l} \frac{1}{m}}{l}}{k}}{j}}{i}}{(n+1)!}&=&180 \zeta_{5}-\frac{\pi ^5}{\sqrt{3}},\\
\sum _{n=1}^{\infty } \frac{\left(\frac{1}{2}\right)_n \sum _{i=1}^n \frac{1}{i^9}}{(n+1)!}&=&-\frac{2339 \pi ^8 \mylog{2}}{453600}-\frac{79 \pi ^6 \mylog{2}^3}{5670}-\frac{1}{75} \pi ^4 \mylog{2}^5-\frac{8}{945} \pi ^2 \mylog{2}^7+\frac{8 \mylog{2}^9}{2835}\\
    &&-\frac{79 \pi ^6 \zeta _{3}}{3780}-\frac{1}{5} \pi ^4 \mylog{2}^2 \zeta _{3}-\frac{4}{9} \pi ^2 \mylog{2}^4 \zeta _{3}+\frac{16}{45} \mylog{2}^6 \zeta _{3}-\frac{4}{3} \pi ^2 \mylog{2} \zeta _{3}^2\\
    &&+\frac{16}{3} \mylog{2}^3 \zeta _{3}^2+\frac{8 \zeta _{3}^3}{3}-\frac{3 \pi ^4 \zeta _{5}}{10}-4 \pi ^2 \mylog{2}^2 \zeta_{5}+8 \mylog{2}^4 \zeta _{5}+48 \mylog{2} \zeta _{3} \zeta _{5}\\
    &&-6 \pi ^2 \zeta _{7}+72 \mylog{2}^2 \zeta _{7}+\frac{340 \zeta _{9}}{3},\\
\end{eqnarray*}
where $(x)_n$ denotes the Pochhammer symbol, $\mylog{k}:=\log(k),\ \zeta_k:=\sum_{n=1}^\infty\frac{1}{n^k}$ and $C$ denotes Catalan's constant.

Note that similar identities were given in~\cite{Liu:2019,BinomialSumIdentities}. Such identities are of interest in physics: in particular, such sums have been studied in order to perform calculations of higher 
order corrections to scattering processes in particle physics \cite{Ablinger:2013eba,Davydychev:2001,Davydychev:2003mv,Fleischer:1998nb,Jegerlehner:2003,Kalmykov:2000qe,Kalmykov:2007dk,Ogreid:1997bx,Weinzierl:2004bn}. 
Moreover similar identities were also considered in \cite{Borwein:2001,Borwein:2000,Lehmer:1985,ZhiWei:2014,Zucker:1985}, 
and there is a connection to Ap\'ery's proof of the irrationality of $\zeta(3)$ (see \cite{Borwein:1987}).
\cite{Weinzierl:2004bn,ZhiWei:2014,Zucker:1985}.

While \cite{BinomialSumIdentities} basically deals with sums of the form 
$$\sum _{n=1}^{\infty } \frac{\left(\frac{1}{2}\right)_n}{n!}f(n) \textnormal{ and } \sum _{n=1}^{\infty } \frac{\left(-\frac{1}{2}\right)_n}{n!}f(n),$$
we are going to consider a much wider class of sums in the frame of this paper. In addition we will state a general computer algebra method to evaluate a large class of sums in terms of nested integrals. 
Moreover, we will be able to prove a structural theorem, about when such a sum can be expressed in terms of so called \textit{cyclotomic polylogarithms}.

The main purpose of this article is to present methods which can be automated, hence not all identities presented in this paper are new identities. 
To make more precise which class of sums we are considering, some definitions are in place. Let $r\in\N$ and let $a_i,c_i\in\N$ and $b_i\in\N_0$ for $1\leq i\leq r$ then we call $\S{(a_1,b_1,c_1),\ldots,(a_r,b_r,c_r)}n$ defined as
\begin{eqnarray}\label{cyclosum}
  \S{(a_1,b_1,c_1),\ldots,(a_r,b_r,c_r)}n:=\sum_{i_1=1}^n\frac{1}{(a_1 i_1+b_1)^{c_1}}\sum_{i_2=1}^{i_1}\frac{1}{(a_2 i_2+b_2)^{c_2}}\cdots\sum_{i_r=1}^{i_{r-1}}\frac{1}{(a_r i_r+b_r)^{c_r}}
\end{eqnarray}
a \textit{cyclotomic harmonic sum} (compare~\cite{Ablinger:2013jta,Ablinger:2013hcp,Ablinger:2013eba,Ablinger:2011te}) of depth $r$. 
Note that if $a_i=1$ and $b_i=0$ for $1\leq i\leq r$ we write
\begin{eqnarray}\label{hsum}
\S{c_1,c_2,\ldots,c_r}n:=\S{(1,0,c_1),(1,0,c_2),\ldots,(1,0,c_r)}n,
\end{eqnarray}
and we call $\S{c_1,c_2,\ldots,c_r}n$ a 
\textit{multiple harmonic sum} (see, e.g.,~\cite{Ablinger:2013hcp,Ablinger:2013cf,Bluemlein1999,Bluemlein2000,Vermaseren1998}).

The sums we are considering take the form
\begin{eqnarray}\label{Pochhammersum}
  \sum_{n=1}^\infty \frac{(p)_n}{(an+b)^c(n+d)!}f(n),
\end{eqnarray}
where $a,b,c,d\in\N_0,\ p\in\R$ and $f(n)$ is a cyclotomic harmonic sum. We will refer to~(\ref{Pochhammersum}) as \textit{Pochhammer sum}.

We are going to find representations of these Pochhammer sums in terms of special classes of integrals (that are similar to the iterated integrals in~\cite{Ablinger:2014} and correspond to the iterated integrals in~\cite{BinomialSumIdentities}). 
These classes of integrals are iterated integrals over \textit{hyperexponential} functions. More precisely a function $f(x)$ is called \textit{hyperexponential} if
$$\frac{f^\prime(x)}{f(x)}=q(x),$$
where $q(x)$ is a rational function in $x.$

Then an \textit{iterated integral} over the hyperexponential functions $f_1(x),f_2(x),\ldots,f_k(x)$ is defined recursively by
$$
\GL{f_1(\tau),f_2(\tau),\cdots,f_k(\tau),x}=\int_0^xf_1(\tau_1)\GL{f_2(\tau),\cdots,f_k(\tau),\tau_1}d\tau_1,
$$
with the special case $\GL{x}=1.$
Since some letters might have a non-integrable singularity at the base point $x=0$ we consistently define
$$
\GL{f(\tau),x}:=\int_0^x \left(f(t)-\frac{c}{t}\right)dt+c\log(x),
$$
where $c$ takes the unique value such that the integrand on the right hand side is integrable at $t=0.$ It is important to note that this definition preserves the derivative $\frac{d}{dx}\GL{f(\tau),x}~=~f(x).$ In general, we set
\begin{eqnarray*}
\GL{f_1(\tau),\ldots,f_j(\tau),x}&:=&\int_0^x \left(f_1(t)\GL{f_2(\tau),\ldots,f_j(\tau),t}-\sum_{i=0}^kc_i\frac{\log(t)^i}{t}\right)dt\\&&+\sum_{i=0}^k \frac{c_i}{i+1}\log(x)^{i+1},
\end{eqnarray*}
where $k$ and $c_0,\ldots,c_k$ are chosen to remove any non-integrable singularity. Again the result is unique and retains $$\frac{d}{dx}\GL{f_1(\tau),\ldots,f_j(\tau),x}=f_1(x)\GL{f_2(\tau),\ldots,f_j(\tau),x}.$$

In the following we will define a subclass of iterated integrals (compare~\cite{Ablinger:2011te}). For $a \in \N$ and $b \in \N,$ $b < \varphi(a),$ where $\varphi$ denotes Euler's totient function, we define 
\begin{eqnarray}
&&f_a^b:(0,1)\mapsto \R\nonumber\\
&&f_a^b(x)=\left\{ 
		\begin{array}{ll}
				\frac{1}{x}, &  \textnormal{if }a=b=0  \\
				\frac{x^b}{\Phi_a(x)}, & \textnormal{otherwise},
		\end{array} 
		\right.  \nonumber
\end{eqnarray}
where $\Phi_a(x)$ denotes the $a$th cyclotomic polynomial, \eg the first cyclotomic polynomials are given by
\begin{eqnarray*}
\Phi_1(x) &=& x - 1 \\
\Phi_2(x) &=& x + 1 \\
\Phi_3(x) &=& x^2 + x + 1 \\
\Phi_4(x) &=& x^2 + 1 \\
\Phi_5(x) &=& x^4 + x^3 + x^2 + x+ 1~~\text{etc.}
\end{eqnarray*}
Now, let $m_i=(a_i,b_i) \in \N^2,$ $b_i<\varphi(a_i);$ for $x\in (0,1)$ we define \textit{cyclotomic polylogarithms} recursively as follows (compare \eg~\cite{Ablinger:2011te}):
\begin{eqnarray}
\H{}{x}&=&1,\nonumber\\
\H{m_1,\ldots,m_k}{x} &=&\left\{ 
		  	\begin{array}{ll}
						\frac{1}{k!}(\log{x})^k,&  \textnormal{if }m_i=(0,0)\\
						  &\\
						\int_0^x{f_{a_1}^{b_1}(y) \H{m_2,\ldots,m_k}{y}dy},& \textnormal{otherwise}. 
			\end{array} \right.  \nonumber
\end{eqnarray}
We call $k$ the weight of a cyclotomic polylogarithm and in case the limit exists we extend the definition to $x=1$ and write
$$
\H{m_1,\ldots,m_k}{1}:=\lim_{x\to1}\H{m_1,\ldots,m_k}{x}.
$$
Note that restricting the alphabet to the letters $(0,0),(1,0)$ and $(2,0)$ leads to \textit{harmonic polylogarithms}~\cite{Remiddi:1999ew}.

The proposed strategy to prove and find Pochhammer sum identities reads as follows and follows the method proposed in~\cite{BinomialSumIdentities}):
\begin{description}\label{genmethod}
 \item[Step 1:] Rewrite the sums in terms of nested integrals.
 \item[Step 2:] Rewrite the integrals in terms of cyclotomic polylogarithms (see~\cite[Section~4]{BinomialSumIdentities}).
 \item[Step 3:] Provide a sufficiently strong database to eliminate relations among these cyclotomic polylogarithms and find reduced expressions (see Section~4).
\end{description}
This article focuses on Step~1 and we will present three different possibilities to find integral representations of Pochhammer sums. In order to accomplish this task, we view infinite sums as 
specializations of generating functions~\cite{BinomialSumIdentities,Ablinger:2014}. Namely, if we are given an
integral representation of the generating function of a sequence, then we can obtain an integral representation for the
infinite sum over that sequence if the limit $x \to 1$ can be carried out. This approach to infinite sums can be summarized by the
following formula:
\[
 \sum_{i=1}^\infty f(i)  = \lim_{x\to1}\sum_{i=1}^\infty x^if(i).
\] 
For details on Step~2 (implemented in the command \texttt{SpecialGLToH} in \HarmonicSumsP) and on Step~3 we refer to~\cite{BinomialSumIdentities}. It has to be mentioned that we computed and used relation tables of harmonic polylogarithms at one up to weight~12, 
for cyclotomic polylogarithms of cyclotomy~4 and~6 we computed and used relation tables of cyclotomic polylogarithms at~1 up to weight~6. The size of these tables amounts to several gigabytes.
Note that the full strategy has been implemented in the Mathematica package {\tt HarmonicSums}\footnote{The package {\tt HarmonicSums} can be downloaded at\\ \url{http://www.risc.jku.at/research/combinat/software/HarmonicSums}.}\cite{HarmonicSums}.

To complete this introduction we define a number of constants that will appear throughout this article:

\begin{tabular}{lll}
$\mylog{2}:=\log(2)$ & $\mylog{3}:=\log(3)$ & $\zeta_3:=S_{3}(\infty );$\\
$\zeta_5:=S_{5}(\infty );$ & $\zeta_7:=S_{7}(\infty );$ &$\zeta_9:=S_{9}(\infty );$\\
$\zeta_{11}:=S_{11}(\infty );$&$C:= \text{Catalan};$ & $p_ 4:= \text{Li}_ 4\left(\frac{1}{2}\right);$\\
$p_ 5:= \text{Li}_ 5\left(\frac{1}{2}\right);$ & $p_ 6:= \text{Li}_ 6\left(\frac{1}{2}\right);$ & $p_ 7:= \text{Li}_ 7\left(\frac{1}{2}\right);$\\
$p_ 8:= \text{Li}_ 8\left(\frac{1}{2}\right);$ & $p_ 9:= \text{Li}_ 9\left(\frac{1}{2}\right);$ & $s_1:=S_{-5,-1}(\infty );$\\
$s_2:=S_{5,-1,-1}(\infty );$ & $s_3:=S_{-5,1,1}(\infty );$ & $s_4:=S_{5,3}(\infty );$\\
$s_5:=S_{-7,-1}(\infty );$ & $s_6:=S_{-5,-1,-1,-1}(\infty );$ & $s_7:=S_{-5,-1,1,1}(\infty );$\\
$h_1:=H_{(3,0),(0,0)}(1);$ & $h_2:=H_{(3,0),(0,0),(1,0)}(1); $ & $h_3:=H_{(3,0),(0,0),(0,0),(0,0)}(1); $\\
$h_4:=H_{(3,0),(0,0),(1,0),(1,0)}(1); $ & $h_6:=H_{(5,1)}(1); $ & $h_6:=H_{(5,3)}(1); $\\
$h_7:=H_{(5,1),(0,0)}(1); $ & $h_8:=H_{(5,2),(0,0)}(1); $
\end{tabular}

Here we extend the definition~(\ref{hsum}) to negative indices by
$$\S{c_1,c_2,\ldots,c_r}n:=\sum_{i_1=1}^n\frac{\sign{c_1}^{i_1}}{\abs{i_1}^{c_1}}\sum_{i_2=1}^{i_1}\frac{\sign{c_2}^{i_2}}{\abs{i_2}^{c_2}}\cdots\sum_{i_r=1}^{i_{r-1}}\frac{\sign{c_r1}^{i_r}}{\abs{i_r}^{c_r}}.$$
Note that these constants do not possess any further relations induced by the algebraic properties given in~\cite[Section~4]{BinomialSumIdentities}, namely shuffle, stuffle, multiple argument and duality relations.

In the following sections we will use different methods to compute integral representations of the generating function. In Section~\ref{sec:2} we will use holonomic closure properties while in Section~\ref{sec:3} and~\ref{sec:4} we will use rewrite rules.
In Section~\ref{sec:4} we will consider a subclass of Pochhammer sums, for which we can directly find representations in terms of cyclotomic polylogarithms \ie we do not have to deal with Step~2 of the proposed strategy.

\section{Using Closure Properties of Holonomic Functions to derive Generating Functions}
\label{sec:2}
In the following we repeat important definitions and properties (compare~\cite{Ablinger:2014,InvMellin,KauersPaule:2011}).
Let $\mathbb K$ be a field of characteristic~0. A function $f=f(x)$ is called \textit{holonomic} (or \textit{D-finite}) if there exist 
polynomials $p_d(x),p_{d-1}(x),\ldots,p_0(x)\in \mathbb K[x]$  (not all $p_i$
being $0$) such that the following holonomic differential equation holds:
\begin{equation}
 p_d(x)f^{(d)}(x)+\cdots+p_1(x)f'(x)+p_0(x)f(x)=0.
\end{equation}
We emphasize that the class of holonomic functions is rather large due to its
closure properties. Namely, if we are given two such differential
equations that contain holonomic functions $f(x)$ and $g(x)$ as solutions, one
can compute holonomic differential equations that contain $f(x)+g(x)$,
$f(x)g(x)$ or $\int_0^x f(y)dy$ as solutions. In other words any composition
of these operations over known holonomic functions $f(x)$ and $g(x)$ is again a
holonomic function $h(x)$. In particular, if for the inner building blocks
$f(x)$ and $g(x)$ the holonomic differential equations are given, also the holonomic
differential equation of $h(x)$ can be computed.\\
Of special importance is the connection to recurrences. A sequence $(f_n)_{n\geq0}$ with $f_n\in\mathbb K$ is called
holonomic (or \textit{P-finite}) if there exist polynomials 
$p_d(n),p_{d-1}(n),\ldots,p_0(n)\in \mathbb K[n]$ (not all $p_i$ being $0$) such
that the holonomic recurrence
\begin{equation}
 p_d(n)f_{n+d}+\cdots+p_1(n)f_{n+1}+p_0(n)f_n=0
\end{equation}
holds for all $n\in\mathbb N$ (from a certain point on).
In the following we utilize the fact that holonomic functions are
precisely the generating functions of holonomic sequences: 
if $f(x)$ is holonomic, then the coefficients 
$f_n$ of the formal power series expansion 
$$f(x) = \sum\limits_{n=0}^{\infty} f_n x^n$$
form a holonomic sequence. Conversely, for a given holonomic sequence
$(f_n)_{n\geq0}$, the function defined by the above sum (\ie its 
generating function) is holonomic (this is true in the sense of formal power series, even if the sum has a zero radius of 
convergence). Note that given a holonomic differential equation for a holonomic function $f(x)$ it is straightforward to 
construct a 
holonomic recurrence for the coefficients of its power series expansion. For a
recent overview of this holonomic machinery and further literature we
refer to~\cite{KauersPaule:2011}.

Since cyclotomic sums are holonomic sequences with respect to~$n$ and the iterated integrals we consider are holonomic functions with respect to~$x,$ we can use holonomic closure properties to derive integral representations of Pochhammer sums:
Given a Pochhammer sum 
$$\sum_{n=1}^\infty \frac{(p)_n}{(an+b)^c(n+d)!}g(n),$$ where $g(n)$ is a cyclotomic sum. We proceed as proposed in on page~\pageref{genmethod}: define $$f_n:=\frac{(p)_n}{(an+b)^c(n+d)!}g(n)$$ and try to find an iterated integral representation of 
$$f(x):=\sum_{n=1}^\infty x^n f_n$$ using the following steps:
\begin{enumerate}
 \item Compute a holonomic recurrence equation for $(f_n)_{n\geq0}.$
 \item Compute a holonomic differential for $f(x).$
 \item Compute initial values for the differential equation.
 \item Solve the differential equation to get a closed form representation for $f(x).$
\end{enumerate}
This procedure is implemented in the packages \HarmonicSumsP\ and can be called by
$$
\textbf{ComputeGeneratingFunction}\left[\frac{(p)_n}{(an+b)^c(n+d)!}g(n),x,\{n,1,\infty \}\right].
$$
We will succeed in finding a closed form representation for $f(x)$ in terms of iterated integrals, if we can find a full solution set of the derived differential equation. 
The command~\texttt{ComputeGeneratingFunction} internally uses the differential solver implemented in~\HarmonicSumsP, which finds all solutions of holonomic differential equations that can be expressed in
terms of iterated integrals over hyperexponential alphabets~\cite{InvMellin,Ablinger:2014,Bronstein,Singer:99,Petkov:92}; these solutions
are called d'Alembertian solutions~\cite{Abramov:94}, in addition for differential equations of order two it finds all solutions that are Liouvillian~\cite{InvMellinKovacic,Kovacic,Singer:99}.

If we succeed in finding a closed form representation for $f(x)$ in terms of iterated integrals, we proceed with Step~2 and Step~3 of the proposed strategy. 
Hence we send $x\to 1$ and try to transform these iterated integrals to expression in terms of cyclotomic polylogarithms and finally we use 
relations between cyclotomic polylogarithms at one to derive an expression in terms of known constants.

The Pochhammer sum
\begin{eqnarray}\label{GeneralExampleSum}
\sum _{n=1}^{\infty } \frac{\left(-\frac{1}{2}\right)_nS_1(n) }{(3+n)^2 (n-1)!}
\end{eqnarray}
will deal as a representative example to illustrate all three different methods that are presented in this article. First, we work out the sum using the method presented above.
\begin{example}\label{GeneralExample}
We consider the sum~(\ref{GeneralExampleSum}) and start to derive a recurrence for $$f_n:=\frac{\left(-\frac{1}{2}\right)_nS_1(n) }{(3+n)^2 (n-1)!};$$ we find:
\begin{eqnarray*}
&&(2+n) (4+n)^2 (1+2 n) (3+2 n) f_{n}-2 (1+n) (5+n)^2 (3+2 n) (5+2 n) f_{n+1}\\&&+4 (1+n) (2+n) (3+n) (6+n)^2 f_{n+2}=0.
\end{eqnarray*}
Using the closure properties of holonomic functions we find the following differential equation 
\begin{eqnarray*}
&&96 f(x)+3 (-250+343 x) f'(x)+3 \left(144-590 x+481 x^2\right) f''(x)\\&&+x \left(352-942 x+599 x^2\right) f^{(3)}(x)+8 x^2 \left(9-20 x+11 x^2\right) f^{(4)}(x)\\&&+4 (-1+x)^2 x^3 f^{(5)}(x)=0,
\end{eqnarray*}
satisfied by $$\sum _{n=1}^{\infty } x^n\frac{\left(-\frac{1}{2}\right)_nS_1(n) }{(3+n)^2 (n-1)!}.$$
We can solve this differential equation for example using the differential equation solver implemented in \HarmonicSumsP:
\begin{eqnarray*}
\textbf{SolveDE}\bigl[&&\hspace{-0.7cm}96 f[x]+3 (-250+343 x) f'[x]+3 \left(144-590 x+481 x^2\right) f''[x]\\&&\hspace{-0.7cm}+x \left(352-942 x+599 x^2\right) f^{(3)}[x]+8 x^2 \left(9-20 x+11 x^2\right) f^{(4)}[x]\\
&&\hspace{-0.7cm}+4 (-1+x)^2 x^3 f^{(5)}[x]==0,f[x],x\bigr].
\end{eqnarray*}
By checking initial values we find
\small
\begin{eqnarray}
&&\frac{1}{7350 x^3}\Biggl(
-1776+808 x-319 x^2-888 x^3-600 x^4+6656 \biggl[\text{G}\left(\frac{\sqrt{1-\tau }}{\tau };x\right)-\text{G}\left(\frac{1}{\tau };x\right)\biggr]\nonumber\\
&&+3360\Biggl[ \text{G}\left(\frac{1}{\tau },\frac{1}{\tau };x\right)-\text{G}\left(\frac{1}{\tau },\frac{\sqrt{1-\tau }}{\tau };x\right)
+\text{G}\left(\frac{\sqrt{1-\tau }}{\tau },\frac{1}{1-\tau };x\right)+ \text{G}\left(\frac{\sqrt{1-\tau }}{\tau },\frac{1}{\tau };x\right)\nonumber\\
&&- \text{G}\left(\frac{\sqrt{1-\tau }}{\tau },\frac{\sqrt{1-\tau }}{\tau };x\right)\biggr]
+4\sqrt{1-x}\biggl(\left(404-218 x-111 x^2-75 x^3\right) \biggl[\text{G}\left(\frac{1}{1-\tau };x\right)\nonumber\\
&&+ \text{G}\left(\frac{1}{\tau };x\right)- \text{G}\left(\frac{\sqrt{1-\tau }}{\tau };x\right)\biggr]+222 \left(2-x-x^2\right)\biggr)
\Biggr).
\label{ExampleGLRep}
\end{eqnarray}
\normalsize
At this point we send $x\to 1$ and use the command \texttt{SpecialGLToH} in \HarmonicSumsP\ to derive an expression in terms of cyclotomic polylogarithms (compare~\cite[Section 3]{BinomialSumIdentities}). This leads to
\begin{eqnarray*}
&&-\frac{9367}{7350}-\frac{3328 H_{(0,0)}(1)}{3675}+\frac{8}{35} H_{(0,0)}(1){}^2-\frac{64 H_{(2,0)}(1)}{3675}-\frac{32}{35} H_{(2,0)}(1){}^2-\frac{16}{35} H_{(0,0),(0,0)}(1)\\&&-\frac{32}{35} H_{(0,0),(1,0)}(1)-\frac{32}{35}
   H_{(2,0),(0,0)}(1)+\frac{64}{35} H_{(2,0),(1,0)}(1)+\frac{64}{35} H_{(2,0),(2,0)}(1).
\end{eqnarray*}
Finally, we can use relations between cyclotomic polylogarithms at one (compare~\cite[Section 4]{BinomialSumIdentities}) to derive
\begin{eqnarray}\label{ExampleResult}
 \sum _{n=1}^{\infty } \frac{\left(-\frac{1}{2}\right)_nS_1(n) }{(3+n)^2 (n-1)!}=\frac{-9367+560 \pi ^2+6720 \mylog{2}^2-128 \mylog{2}}{7350}.
\end{eqnarray}
Note that in the last step of this example we are actually only dealing with harmonic polylogarithms (see~\cite{Remiddi:1999ew}).
\end{example}

Let us now list several identities that could be computed using this method:
\begin{eqnarray*}
\sum _{n=1}^{\infty } \frac{\left(\frac{1}{3}\right)_n S_{1,1,1}(n)}{(n+1)!}&=&18 \zeta _3-\frac{\pi ^3}{\sqrt{3}},\\
\sum _{n=1}^{\infty } \frac{\left(\frac{1}{3}\right)_n S_2(n)}{(n+1)!}&=&\frac{5 \pi ^2}{16}+\frac{27 h_1}{8}+\frac{3}{8} \sqrt{3} \pi  l_3-\frac{27 l_3^2}{16},\\
\sum\limits_{n=1}^{\infty } \frac{\left(\frac{1}{4}\right)_n \S{2}{n}}{(n+1)!}&=&-\frac{16 C}{3}+\frac{7 \pi ^2}{18}-6 \mylog{2}^2+2 \pi  \mylog{2},\\
\sum _{n=1}^{\infty } \frac{\left(\frac{1}{2}\right)_n S_{(2,1,1)}(n)}{(2 n+1)^2 n!}&=&\frac{1}{4} \pi  \mylog{2} (3 \mylog{2}-2),\\
\sum _{n=1}^{\infty } \frac{\left(\frac{1}{2}\right)_n S_{(2,1,1),(2,1,1)}(n)}{(2n+1)^2 n!}&=&\frac{1}{96} \pi  \left(4 \pi ^2 \mylog{2}-72 \mylog{2}^2+56 \mylog{2}^3-9 \zeta_{3}\right).
\end{eqnarray*}
Several formulas that can be found in~\cite{Liu:2019} can be also discovered and proved using the described method. Here we are going to list some of them:
\begin{eqnarray*}
\sum _{n=0}^{\infty } \frac{\left(\frac{1}{2}\right)_n \left(S_1(n){}^2-S_2(n)\right)}{(n+1)!}&=&8 l_2^2+\frac{2 \pi ^2}{3},\\
\sum _{n=0}^{\infty } \frac{\left(\frac{1}{2}\right)_n \left(S_1(n){}^3-3 S_1(n) S_2(n)+2 S_3(n)\right)}{(n+1)!}&=&24 \zeta _3+16 l_2^3+4 \pi ^2 l_2,\\
\sum _{n=0}^{\infty } \frac{\left(\frac{1}{4}\right)_n \left(S_1(n){}^3-3 S_1(n) S_2(n)+2 S_3(n)\right)}{(n+1)!}&=&-96 C l_2+16 \pi  C+72 \zeta _3+36 l_2^3-18 \pi  l_2^2\\&&+13 \pi ^2 l_2-\frac{9 \pi ^3}{2},\\
\sum _{n=0}^{\infty } \frac{\left(\frac{1}{4}\right)_n \left(S_1(n){}^2-S_2(n)\right)}{(n+1)!}&=&288 C l_2+48 \pi  C+216 \zeta _3+108 l_2^3+54 \pi  l_2^2\\&&+39 \pi ^2 l_2+\frac{27 \pi ^3}{2},\\
\sum _{n=0}^{\infty } \frac{\left(\frac{1}{4}\right)_n \left(S_1(n){}^2-S_2(n)\right)}{(n+1)!}&=&-\frac{32 C}{3}+12 l_2^2-4 \pi  l_2+\frac{13 \pi ^2}{9},\\
\sum _{n=0}^{\infty } \frac{\left(\frac{3}{4}\right)_n \left(S_1(n){}^2-S_2(n)\right)}{(n+1)!}&=&32 C+36 l_2^2+12 \pi  l_2+\frac{13 \pi ^2}{3},\\
\sum _{n=1}^{\infty } \frac{\left(\frac{1}{2}\right)_{n-1} \left(S_1(n){}^2-2 S_1(n)+S_2(n)\right)}{n!}&=&8.
\end{eqnarray*}

Note that this method can also be used to compute integral representations of sums of the form
\begin{eqnarray*}
\sum _{n=1}^{\infty } \frac{x^n (3)_n S_{3}(n)}{n^2 n!}.
\end{eqnarray*}
Here we find 
\begin{eqnarray*}
\sum _{n=1}^{\infty } \frac{x^n (3)_n S_{3}(n)}{n^2 n!}&=&H_{(0,0),(1,0),(0,0),(0,0),(1,0)}(x)-\frac{3 \text{Li}_2(x){}^2}{4}-\frac{\text{Li}_3(x)}{2 (-1+x)}\\&&-\frac{3}{2} \log (1-x) \text{Li}_3(x)+\frac{3 \text{Li}_4(x)}{2}+\text{Li}_5(x).
\end{eqnarray*}
and sending for instance $x\to \frac{1}2$ we get:
\begin{eqnarray*}
\sum _{n=1}^{\infty } \frac{\left(\frac{1}{2}\right)^n (3)_n S_{3}(n)}{n^2 n!}.&=&-\frac{\pi^4}{192}-\frac{\pi^2l_2}{12}+\frac{\pi^4l_2}{288}-\frac{1}{16}\pi^2l_2^2+\frac{l_2^3}{6}-\frac{5}{72}\pi^2l_2^3+\frac{l_2^4}{16}+\frac{11l_2^5}{120}\\
&&+\frac{3p_4}{2}+3l_2p_4+4p_5+\frac{7\zeta_3}{8}-\frac{7\pi^2\zeta_3}{48}+\frac{21l_2\zeta_3}{16}+\frac{7}{8}l_2^2\zeta_3-\frac{81\zeta_5}{64}.
\end{eqnarray*}
Finally, we consider 
\begin{eqnarray}\label{S11Example}
 \sum _{n=1}^{\infty } \frac{\left(\frac{1}{2}\right)_n S_{11}(n)}{(n+1)!},
\end{eqnarray}
proceeding as proposed, we find a differential equation of order 16:
\small
\begin{eqnarray*}
&&430080 f(x)+210 (-4096+1592275 x) f'(x)\\
&&+42 \left(-33554432-6356812 x+407269601 x^2\right) f''(x)\\
&&+\left(671088640-33746963856 x-8037305736 x^2+192200072453 x^3\right) f^{(3)}(x)\\
&&+x \left(11047661360-204994450032 x-61653276602 x^2+771941124781 x^3\right) f^{(4)}(x)\\
&&+13 x^2 \left(3812823056-38280317036 x-13991732902 x^2+109483643797 x^3\right) f^{(5)}(x)\\
&&+26 x^3 \left(3555308396-22952549314 x-9866689087 x^2+53652573053 x^3\right) f^{(6)}(x)\\
&&+572 x^4\left(152216474-697858881 x-344085550 x^2+1394066246 x^3\right) f^{(7)}(x)\\
&&+143 x^5 \left(323583896-1123610312 x-623388464 x^2+1975831409 x^3\right) f^{(8)}(x)\\
&&+143 x^6 \left(103854560-285705072 x-175728306 x^2+451597351 x^3\right) f^{(9)}(x)\\
&&+286 x^7 \left(10492016-23636810 x-15928139 x^2+34105982 x^3\right) f^{(10)}(x)\\
&&+3 x^8 \left(130094536-246156812 x-180008400 x^2+328091581 x^3\right) f^{(11)}(x)\\
&&+x^9 \left(32842216-53242200 x-41920782 x^2+66163633 x^3\right) f^{(12)}(x)\\
&&+x^{10} \left(1762640-2487876 x-2095294 x^2+2904173 x^3\right) f^{(13)}(x)\\
&&+2 x^{11} \left(28900-35986 x-32239 x^2+39703 x^3\right) f^{(14)}(x)\\
&&+4 x^{12} \left(262-291 x-276 x^2+305 x^3\right) f^{(15)}(x)\\
&&+8 (-1+x)^2 x^{13} (1+x) f^{(16)}(x)=0.
\end{eqnarray*}
\normalsize
Solving this differential equation is possible but takes quite some time, so this indicates, that we might look for more feasible methods to find generating function representations for Pochhammer sums of that kind. 
In the following sections we will introduce rewrite rules, which will allow to compute generating function representations 
of Pochhammer sums without having to solve differential equations.
\section{Using Rewrite Rules to derive Generating Functions}
\label{sec:3}
In this section we are going to state rewrite rules which will allow us to find integral representations of the generating functions of Pochhammer sums without having to solve differential equations. 
We will summarize these rewrite rules in the following lemmas. We start with the base cases where there is no inner sum present:

\begin{lemma}\label{GLBaseCase} Let $\K$ be a field of characteristic 0. Then the following identities hold in the ring $\K[[x]]$ of formal power series with $a,c\in\N$ and $b,d\in\Z$:
 \begin{eqnarray}
   \sum_{n=1}^\infty x^n\frac{(p)_n}{(n+d)!} &=& \frac{(1-x)^{d-p}}{x^d}(p)_{-d},\ d<0, \label{eq:GenFunPochhammerSum01}\\
   \sum_{n=1}^\infty x^n\frac{(p)_n}{n} &=& (1-x)^{-p}-1,\label{eq:GenFunPochhammerSum02}\\
   \sum_{n=1}^\infty x^n\frac{(p)_n}{(n+d)!} &=& (1-x)^{d-p}\frac{p}{d!x^d}\int_0^x (1-t)^{p-d-1}t^d dt,\ d>0, \label{eq:GenFunPochhammerSum03}\\
   \sum_{n=1}^\infty x^n\frac{(p)_n}{(a\,n+b)^c(n+d)!} &=& \frac{x^{-\frac{b}{a}}}{a}\int_0^xt^{\frac{b}{a}-1}\sum_{n=1}^\infty t^n \frac{(p)_n}{(a\,n+b)^{c-1}(n+d)!}dt.\label{eq:GenFunPochhammerSum04}
 \end{eqnarray}
\end{lemma}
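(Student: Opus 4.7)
My plan is to establish the four identities separately. The unifying starting point is the generalized binomial series $(1-x)^{-\alpha}=\sum_{n\geq0}\frac{(\alpha)_n}{n!}x^n$, a formal identity in $\K[[x]]$ for every $\alpha\in\K$. Cases \eqref{eq:GenFunPochhammerSum01}, \eqref{eq:GenFunPochhammerSum02}, and \eqref{eq:GenFunPochhammerSum04} follow from this series together with routine series manipulation, while \eqref{eq:GenFunPochhammerSum03} is the one that needs real work: I would handle it by deriving and solving a first-order linear ODE.

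I treat \eqref{eq:GenFunPochhammerSum02} first, reading the denominator as $n!$ (the natural $d=0$ case of the family, matching the $(n+d)!$ pattern of the siblings). It is then just the binomial series with the $n=0$ term subtracted. For \eqref{eq:GenFunPochhammerSum01}, I write $d=-m$ with $m\in\N$, reindex by $k=n-m$, and use the factorization $(p)_{k+m}=(p)_m(p+m)_k$ together with the vanishing of $1/k!$ at negative integers to reduce the sum to $x^m(p)_m\sum_{k\geq0}\frac{(p+m)_k}{k!}x^k=x^m(p)_m(1-x)^{-(p+m)}$, which matches the claimed right-hand side after resubstituting $d=-m$.

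For \eqref{eq:GenFunPochhammerSum03} I set $F_d(x):=\sum_{n=1}^\infty\frac{(p)_n}{(n+d)!}x^n$ and derive a linear ODE directly from its power-series coefficients. The identity $(n+d)\cdot\frac{(p)_n}{(n+d)!}=\frac{(p)_n}{(n+d-1)!}$ gives $xF_d'(x)+dF_d(x)=\sum_{n\geq1}\frac{(p)_n}{(n+d-1)!}x^n$, while $(p+n)(p)_n=(p)_{n+1}$ together with an index shift gives $x^2F_d'(x)+pxF_d(x)=\sum_{n\geq2}\frac{(p)_n}{(n+d-1)!}x^n$. Subtracting telescopes to the single $n=1$ term, producing
\begin{equation*}
  x(1-x)F_d'(x)+(d-px)F_d(x)=\frac{p}{d!}\,x.
\end{equation*}
The partial-fraction decomposition $\frac{d-px}{x(1-x)}=\frac{d}{x}+\frac{d-p}{1-x}$ identifies the integrating factor $\mu(x)=x^d(1-x)^{p-d}$, and multiplying through turns the left-hand side into $(\mu F_d)'=\frac{p}{d!}\,x^d(1-x)^{p-d-1}$. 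Integrating from $0$ to $x$ (with $F_d(x)=O(x)$ killing the boundary contribution) yields $x^d(1-x)^{p-d}F_d(x)=\frac{p}{d!}\int_0^x t^d(1-t)^{p-d-1}\,dt$, which is \eqref{eq:GenFunPochhammerSum03} after dividing through by $x^d(1-x)^{p-d}$.

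Finally, \eqref{eq:GenFunPochhammerSum04} drops out of interchanging sum and integral: since $\int_0^x t^{n+b/a-1}\,dt=\frac{a}{an+b}\,x^{n+b/a}$, termwise integration of $t^{b/a-1}$ against the inner series on the right converts one factor of $(an+b)^{-1}$ into the exponent, and the prefactor $\frac{x^{-b/a}}{a}$ absorbs the leftover $a$ and $x^{b/a}$. I expect the only genuine obstacle to be the bookkeeping in the ODE derivation of \eqref{eq:GenFunPochhammerSum03}, specifically aligning the two index-shifted sums so that the telescoping leaves only the $n=1$ term; once that is handled correctly, the integrating-factor step and the remaining three cases are all routine.
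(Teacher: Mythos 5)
Your proof is correct. Note first that the paper itself states Lemma~\ref{GLBaseCase} without any proof, so there is nothing to match line by line; the nearest model is the verification style used for Lemmas~\ref{GLc0dl0}--\ref{GLcgeq1}, where the author simply exhibits a first-order initial value problem satisfied by both sides and appeals to uniqueness. Your treatment of \eqref{eq:GenFunPochhammerSum03} is the constructive counterpart of that: you derive the ODE $x(1-x)F_d'+(d-px)F_d=\frac{p}{d!}x$ from the coefficient identities $(n+d)\frac{(p)_n}{(n+d)!}=\frac{(p)_n}{(n+d-1)!}$ and $(p+n)(p)_n=(p)_{n+1}$ and then solve it with the integrating factor $x^d(1-x)^{p-d}$, rather than guessing the answer and checking it; both the telescoping to the single $n=1$ term and the vanishing of the boundary term (since $d>0$ and $F_d=O(x)$) check out. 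The direct series arguments for \eqref{eq:GenFunPochhammerSum01} (via $(p)_{k+m}=(p)_m(p+m)_k$ and the vanishing of $1/k!$ at negative integers) and for \eqref{eq:GenFunPochhammerSum04} (termwise integration of $t^{n+b/a-1}$) are exactly what is intended. You are also right to read the denominator in \eqref{eq:GenFunPochhammerSum02} as $n!$ rather than $n$: as printed the identity is false (take $p=1$, where $(1)_n=n!$ and the left side would be $\sum_{n\ge1}(n-1)!x^n$), and $n!=(n+d)!$ with $d=0$ is the only reading consistent with its siblings; it is worth flagging this as a typo rather than silently correcting it.
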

In case an inner sum is present we will make use of the following three lemmas.
\begin{lemma}\label{GLc0dl0} Let $\K$ be a field of characteristic 0 and let $f:\N \to \K.$ Then the following identity holds in the ring $\K[[x]]$ of formal power series with $d<0$:
 \begin{eqnarray}
   &&\sum_{n=1}^\infty x^n\frac{(p)_n}{(n+d)!}\sum_{i=1}^nf(i) = \label{eq:GenFunPochhammerSum2}\\
     &&\hspace{1cm}=\frac{(1-x)^{d-p}}{x^d}\left((p)_{-d}\sum_{i=1}^{-d}f(i)+\int_0^x\frac{(1-t)^{p-d-1}}{t^{1-d}}\sum_{n=1}^\infty t^n \frac{(p)_n}{(n+d-1)!}f(n)dt\right).\nonumber
 \end{eqnarray}
\end{lemma}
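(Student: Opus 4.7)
I would rewrite the target identity in the equivalent form
\[
\tilde F(x) \;:=\; x^d(1-x)^{p-d}F(x) \;=\; (p)_{-d}\sum_{i=1}^{-d}f(i)\;+\;\int_0^x g(t)\,dt,
\]
where $F(x)$ denotes the left-hand side of~\eqref{eq:GenFunPochhammerSum2} and
\[
g(t) \;:=\; \frac{(1-t)^{p-d-1}}{t^{1-d}}\sum_{n=1}^\infty t^n\frac{(p)_n}{(n+d-1)!}f(n).
\]
The plan is then to verify two things: (i) $\tilde F(0)$ equals the constant $(p)_{-d}\sum_{i=1}^{-d}f(i)$ on the right, and (ii) $\tilde F'(x) = g(x)$. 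Integration from $0$ to $x$ then finishes the proof.

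Step~(i) is immediate from the convention $1/k! = 0$ for $k<0$ (already implicit in~\eqref{eq:GenFunPochhammerSum01}): the series for $F(x)$ effectively begins at $n=-d$ with leading coefficient $(p)_{-d}\sum_{i=1}^{-d}f(i)$, so multiplying by $x^d(1-x)^{p-d}$ and evaluating at $x=0$ yields precisely this constant.

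For step~(ii), applying the product rule and factoring out $x^{d-1}(1-x)^{p-d-1}$ gives
\[
\tilde F'(x) \;=\; x^{d-1}(1-x)^{p-d-1}\bigl[(d-px)F(x) + x(1-x)F'(x)\bigr],
\]
so it suffices to identify the bracketed quantity with $\sum_{n\geq 1} x^n\frac{(p)_n}{(n+d-1)!}f(n)$. Writing $a_n := \frac{(p)_n}{(n+d)!}$ and $S_f(n) := \sum_{i=1}^n f(i)$, I would expand $F$ and $xF'$ as power series and collect the coefficient of $x^n$; after the usual reindexing $n \mapsto n-1$ of the terms carrying the extra factor $x$, this coefficient becomes $(d+n)\,a_n S_f(n) - (p+n-1)\,a_{n-1}S_f(n-1)$. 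The two Pochhammer identities $(d+n)a_n = \frac{(p)_n}{(n+d-1)!}$ (from $(n+d)! = (n+d)(n+d-1)!$) and $(p+n-1)(p)_{n-1} = (p)_n$ collapse both summands to the common prefactor $\frac{(p)_n}{(n+d-1)!}$, and the telescoping $S_f(n)-S_f(n-1) = f(n)$ completes the identification.

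The main obstacle is purely bookkeeping around the boundary: one must confirm that the reindexings in $x(1-x)F'(x)$ and in the $px$ part of $(d-px)F(x)$ produce no spurious edge terms (the conventions $S_f(0)=0$ and $1/k!=0$ for $k<0$ do this silently, and the shift moves the effective starting index of the right-hand sum from $n=-d$ to $n=1-d$), and one should also check that $g(t)$ is integrable at $t=0$ — which it is, since the inner sum in $g$ starts at $n=1-d$, yielding leading behaviour $t^{1-d}$ that cancels the $t^{d-1}$ prefactor. With these matches confirmed, (i) and (ii) combine to give the integrated form stated in the lemma.
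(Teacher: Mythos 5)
Your proof is correct and is essentially the paper's: the bracketed quantity $(d-px)F(x)+x(1-x)F'(x)$ that you compute is exactly the content of the first-order ODE $y'(x)-\frac{px-d}{(1-x)x}y(x)=\frac{1}{(1-x)x}\sum_{n\geq1}x^n\frac{(p)_n}{(n+d-1)!}f(n)$, $y(0)=0$, which the paper asserts both sides satisfy, and your integrating-factor/FTC formulation is just the explicit solution of that ODE. If anything, your version is slightly more careful, since for $d<0$ the homogeneous solution $x^{-d}(1-x)^{d-p}$ also vanishes at $x=0$, so the paper's appeal to uniqueness of the initial value problem with only the datum $y(0)=0$ really rests on the leading-coefficient observation you make explicit in step~(i).
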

\begin{proof}
 Both sides satisfy the following initial value problem for $y(x),$ which has a unique solution near $x=0:$
 \begin{eqnarray*}
 y'(x)-\frac{px-d}{(1-x)x}y(x)&=&\frac{1}{(1-x)x}\sum_{n=1}^{\infty}x^n\frac{(p)_n}{(n+d-1)!}f(n),\\
 y(0)&=&0.
 \end{eqnarray*}
\end{proof}

\begin{lemma}\label{GLc0dgeq0} Let $\K$ be a field of characteristic 0 and let $f:\N \to \K.$ Then the following identity holds in the ring $\K[[x]]$ of formal power series with $d\geq0$:
 \begin{eqnarray}
   &&\sum_{n=1}^\infty x^n\frac{(p)_n}{(n+d)!}\sum_{i=1}^nf(i) =\label{eq:GenFunPochhammerSum1}\\
   &&\hspace{2cm}=\frac{(1-x)^{d-p}}{x^d}\int_0^xt^{d-1}(1-t)^{p-d-1}\sum_{n=1}^\infty t^n \frac{(p)_n}{(n+d-1)!}f(n)dt.\nonumber
 \end{eqnarray}
\end{lemma}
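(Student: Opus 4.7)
My plan is to follow the same template as the proof of Lemma \ref{GLc0dl0}: exhibit a first-order inhomogeneous linear ODE together with an initial condition at $x=0$ which admits a unique formal power series solution, and verify that both sides of \eqref{eq:GenFunPochhammerSum1} satisfy this IVP.

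The first step is to derive the ODE for the left-hand side $L(x):=\sum_{n=1}^\infty x^n \frac{(p)_n}{(n+d)!}\sum_{i=1}^n f(i)$. Writing $a_n=(p)_n/(n+d)!$ and $S(n)=\sum_{i=1}^n f(i)$, I would use the ratio identity $(n+d+1)a_{n+1}=(p+n)a_n$ together with $S(n)=S(n-1)+f(n)$ to obtain the coefficient recurrence $(n+d)a_n S(n)=(p+n-1)a_{n-1}S(n-1)+(n+d)a_n f(n)$. Multiplying by $x^n$, summing, and rearranging yields the IVP
\begin{equation*}
  L'(x)-\frac{px-d}{x(1-x)}L(x)=\frac{1}{x(1-x)}\sum_{n=1}^\infty x^n\frac{(p)_n}{(n+d-1)!}f(n),\qquad L(0)=0,
\end{equation*}
exactly matching the one appearing in the proof of Lemma \ref{GLc0dl0}.

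The second step is to check that the right-hand side $R(x)$ of \eqref{eq:GenFunPochhammerSum1} satisfies the same IVP. Writing $R(x)=(1-x)^{d-p}x^{-d}h(x)$ with $h(x):=\int_0^x t^{d-1}(1-t)^{p-d-1}G(t)\,dt$ and $G(t):=\sum_{n\geq 1} t^n (p)_n f(n)/(n+d-1)!$, the product rule gives
\begin{equation*}
  R'(x)=\left(\frac{p-d}{1-x}-\frac{d}{x}\right)R(x)+\frac{G(x)}{x(1-x)},
\end{equation*}
and the elementary identity $\frac{p-d}{1-x}-\frac{d}{x}=\frac{px-d}{x(1-x)}$ completes the verification. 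For the initial condition, note that since $G(t)=O(t)$ the integrand in $h$ is $O(t^d)$ when $d\geq 1$ and $O(1)$ when $d=0$, so in both cases $h(x)=O(x^{d+1})$ and $R(x)=O(x)$ vanishes at the origin.

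The only delicate point is uniqueness: when $d\neq 0$ the ODE has a singular coefficient $-d/(x(1-x))$ at $x=0$, so the classical analytic uniqueness theorem does not apply directly. I would instead argue at the level of formal power series: writing $y(x)=\sum_n y_n x^n$, the ODE is equivalent to the recursion $(n+d)y_n=(n-1+p)y_{n-1}+G_n$ with $G_n:=[x^n]G(x)$, and since $d\geq 0$ the leading factor $n+d$ never vanishes for $n\geq 1$, so $y_0=0$ uniquely determines the entire series in $\K[[x]]$. Once this uniqueness is in place, the rest is routine bookkeeping.
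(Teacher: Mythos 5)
Your proposal is correct and follows exactly the paper's own argument: the paper proves Lemma~\ref{GLc0dgeq0} by asserting that both sides satisfy the very initial value problem $y'(x)-\frac{px-d}{(1-x)x}y(x)=\frac{1}{(1-x)x}\sum_{n\geq1}x^n\frac{(p)_n}{(n+d-1)!}f(n)$, $y(0)=0$, which you derive and verify. Your additional care with the coefficient recurrence $(n+d)y_n=(n-1+p)y_{n-1}+G_n$ to justify uniqueness in $\K[[x]]$ (where $n+d\neq0$ for $n\geq1$ since $d\geq0$) fills in a detail the paper leaves implicit.
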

\begin{proof}
 Both sides satisfy the following initial value problem for $y(x),$ which has a unique solution near $x=0:$
 \begin{eqnarray*}
 y'(x)-\frac{p\,x-d}{(1-x)x}y(x)&=&\frac{1}{(1-x)x}\sum_{n=1}^{\infty}x^n\frac{(p)_n}{(n+d-1)!}f(n),\\
 y(0)&=&0.
 \end{eqnarray*}
\end{proof}

\begin{lemma}\label{GLcgeq1} Let $\K$ be a field of characteristic 0 and let $f:\N \to \K.$ Then the following identity holds in the ring $\K[[x]]$ of formal power series with $a,c\in\N$ and $b\in\Z$:
 \begin{eqnarray}
   &&\sum_{n=1}^\infty x^n\frac{(p)_n}{(a\,n+b)^c(n+d)!}\sum_{i=1}^nf(i)=\label{eq:GenFunPochhammerSum3}\\
   &&\hspace{2cm}=\frac{x^{-\frac{b}{a}}}{a}\int_0^xt^{\frac{b}{a}-1}\sum_{n=1}^\infty t^n \frac{(p)_n}{(a\,n+b)^{c-1}(n+d)!}\sum_{i=1}^nf(i)dt.\nonumber
 \end{eqnarray}
\end{lemma}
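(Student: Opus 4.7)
The identity amounts to reversing the fundamental theorem of calculus: the missing $1/(an+b)$ factor on the left is exactly what the integration $\int_0^x t^{n+b/a-1}\,dt$ produces on the right. So the natural strategy is to start from the right-hand side, interchange the integral with the summation over $n$, evaluate the resulting elementary monomial integral, and show that the leftover prefactors collapse to recover the left-hand side term by term.

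Concretely, since the inner series in $t$ is being handled coefficient by coefficient in a suitable formal power series ring, the swap of $\int_0^x$ and $\sum_{n\geq 1}$ is legitimate and gives
\begin{eqnarray*}
\text{RHS}
&=&\frac{x^{-b/a}}{a}\sum_{n=1}^\infty \frac{(p)_n}{(an+b)^{c-1}(n+d)!}\Bigl(\sum_{i=1}^n f(i)\Bigr)\int_0^x t^{n+b/a-1}\,dt\\
&=&\frac{x^{-b/a}}{a}\sum_{n=1}^\infty \frac{(p)_n}{(an+b)^{c-1}(n+d)!}\Bigl(\sum_{i=1}^n f(i)\Bigr)\cdot\frac{a\,x^{n+b/a}}{an+b}\\
&=&\sum_{n=1}^\infty x^n\frac{(p)_n}{(an+b)^c(n+d)!}\sum_{i=1}^n f(i),
\end{eqnarray*}
which is exactly the left-hand side. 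The key observation driving this is that the prefactor $\frac{a}{an+b}$ produced by integration is precisely what upgrades the denominator exponent from $c-1$ to $c$, while $\frac{x^{-b/a}}{a}\cdot x^{n+b/a}$ collapses to $x^n/a$ and cancels the leading $a$.

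As an alternative route, one could mimic the ODE-based arguments used for Lemmas~\ref{GLc0dl0} and~\ref{GLc0dgeq0}: differentiating the right-hand side and using $\frac{d}{dx}(x^{-b/a}\int_0^x t^{b/a-1}\phi(t)\,dt) = -\frac{b}{ax}(\cdots)+\frac{\phi(x)}{x}$ reduces the claim to the assertion that both sides satisfy the initial value problem $y'(x)-\frac{b}{a\,x}y(x)=\frac{1}{x}\sum_{n\geq 1}x^n\frac{(p)_n}{(an+b)^{c-1}(n+d)!}\sum_{i=1}^n f(i)$ with $y(0)=0$, uniquely solvable near $x=0$.

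The only genuine subtlety I anticipate is the appearance of the non-integer exponent $b/a$ when $a\nmid b$: then $x^{-b/a}$ and $t^{b/a-1}$ do not live in $\K[[x]]$ itself. The standard way around this, and the one I would adopt, is to interpret the identity in $\K[[x^{1/a}]]$ (or equivalently establish it after multiplying both sides by $x^{b/a}$); with that convention fixed, the termwise integration in the calculation above is formally valid and no convergence issues arise. Once this interpretive point is nailed down, the proof is a one-line computation.
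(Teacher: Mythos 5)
Your proof is correct, but it takes a genuinely different route from the paper's. The paper proves this lemma the same way it proves Lemmas~\ref{GLc0dl0} and~\ref{GLc0dgeq0}: it asserts that both sides satisfy the first-order initial value problem $y'(x)-\frac{b}{a\,x}y(x)=\frac{1}{a\,x}\sum_{n=1}^{\infty}x^n\frac{(p)_n}{(an+b)^{c-1}(n+d)!}\sum_{i=1}^n f(i)$ with $y(0)=0$ and invokes uniqueness of the solution near $x=0$. You instead work directly on the right-hand side, exchanging $\int_0^x$ with $\sum_{n}$ and evaluating $\int_0^x t^{n+b/a-1}\,dt=\frac{a}{an+b}\,x^{n+b/a}$ termwise, so that the factor $\frac{a}{an+b}$ visibly upgrades the denominator exponent from $c-1$ to $c$. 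For this particular lemma your computation is more elementary and more transparent; the paper's IVP template buys uniformity, since the identical argument also covers Lemmas~\ref{GLc0dl0} and~\ref{GLc0dgeq0}, where the integrand carries the extra factor $(1-t)^{p-d-1}$ and a clean termwise evaluation is no longer available. Your handling of the fractional exponent (passing to $\K[[x^{1/a}]]$, or multiplying through by $x^{b/a}$) addresses a point the paper leaves implicit, and your termwise antiderivative tacitly needs $an+b\neq0$ for all $n\geq1$, which is forced anyway by the left-hand side being defined. One small correction to your sketched ODE alternative: differentiating $y=\frac{x^{-b/a}}{a}\int_0^x t^{b/a-1}\phi(t)\,dt$ gives $y'+\frac{b}{a x}y=\frac{\phi(x)}{a x}$, so the sign in front of $\frac{b}{a x}y$ and the factor $\frac{1}{a x}$ on the right need adjusting; checking against the left-hand side confirms that $y'+\frac{b}{a x}y$ is the correct combination, since $n+\frac{b}{a}=\frac{an+b}{a}$ is what regenerates the missing power of $an+b$. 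The same sign slip appears in the paper's own displayed IVP, and none of this affects your main argument.
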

\begin{proof}
 Both sides satisfy the following initial value problem for $y(x),$ which has a unique solution near $x=0:$
 \begin{eqnarray*}
 y'(x)-\frac{b}{a\,x}y(x)&=&\frac{1}{a\,x}\sum_{n=1}^{\infty} x^n\frac{(p)_n}{(a\,n+b)^{c-1}(n+d)!}\sum_{i=1}^nf(i),\\
 y(0)&=&0.
 \end{eqnarray*}
\end{proof}

Note that formulas related to the previous lemmas concerning binomial sums can be found in~\cite{Ablinger:2014}.

Let us now, for the second time, consider~(\ref{GeneralExampleSum}) and illustrate how the previous lemmas can be used as rewrite rules to find integral representations of Pochhammer sums.
\begin{example}
We again look for a closed form representation in terms of iterated integrals of 
$$\sum _{n=1}^{\infty } x^n \frac{\left(-\frac{1}{2}\right)_nS_1(n) }{(3+n)^2 (n-1)!}.$$
We start by using Lemma~\ref{GLcgeq1} twice:
\begin{eqnarray*}
  \sum _{n=1}^{\infty } x^n\frac{\left(-\frac{1}{2}\right)_nS_1(n) }{(3+n)^2 (n-1)!}&=&x^{-3}\int_0^xt^2 \sum _{n=1}^{\infty } t^n\frac{S_1(n) \left(-\frac{1}{2}\right)_n}{(3+n) (n-1)!}dt\\
  &=&x^{-3}\int_0^xt^{-1} \int_0^tu^2 \sum _{n=1}^{\infty } u^n\frac{S_1(n) \left(-\frac{1}{2}\right)_n}{(n-1)!}dudt.
\end{eqnarray*}
Now we apply Lemma~\ref{GLc0dl0} followed by applying~(\ref{eq:GenFunPochhammerSum04}) and~(\ref{eq:GenFunPochhammerSum01})
\begin{eqnarray*}
  &&\sum _{n=1}^{\infty } x^n\frac{\left(-\frac{1}{2}\right)_nS_1(n) }{(3+n)^2 (n-1)!}\\
  &&\hspace{1cm}=x^{-3}\int_0^xt^{-1} \int_0^t\frac{u^3}{\sqrt{1-u}}\biggl(\int_0^u\frac{1}{v^2\sqrt{1-v}}\sum _{n=1}^{\infty } v^n\frac{\left(-\frac{1}{2}\right)_n}{n(n-2)!} dv-\frac{1}{2}\biggr)dudt\\
  &&\hspace{1cm}=x^{-3}\int_0^xt^{-1} \int_0^t\frac{u^3}{\sqrt{1-u}}\biggl(\int_0^u\frac{1}{v^2\sqrt{1-v}}\int_0^v\frac{1}{w}\sum_{n=1}^\infty w^n \frac{(-\frac{1}{2})_n}{(n-2)!}dw dv-\frac{1}{2}\biggr)dudt\\
  &&\hspace{1cm}=x^{-3}\int_0^xt^{-1} \int_0^t\frac{u^3}{\sqrt{1-u}}\biggl(\int_0^u\frac{1}{v^2\sqrt{1-v}}\int_0^v \frac{-w}{4(1-w)^{\frac{3}{2}}} dw dv-\frac{1}{2}\biggr)dudt.
\end{eqnarray*}
At this point we rewrite the expression in terms of iterated integrals (this can be done by hand or by using the command \texttt{GLIntegrate} of \HarmonicSumsP) and arrive again at~(\ref{ExampleGLRep}) 
and hence we can proceed as in Example~\ref{GeneralExample} to arrive at 
\begin{eqnarray*}
 \sum _{n=1}^{\infty } \frac{\left(-\frac{1}{2}\right)_nS_1(n) }{(3+n)^2 (n-1)!}=\frac{-9367+560 \pi ^2+6720 \mylog{2}^2-128 \mylog{2}}{7350}.
\end{eqnarray*}
\end{example}

Note that this method is implemented in the package \HarmonicSumsP\ using the command \texttt{PochhammerSumToGL}. Calling
\begin{eqnarray*}
\textbf{PochhammerSumToGL}\left[\frac{\left(-\frac{1}{2}\right)_n S_1(n)}{(3+n)^2 (-1+n)!},x,\{n,1,\infty \}\right]
\end{eqnarray*}
will immediately give~(\ref{ExampleGLRep}).

Reconsidering~(\ref{S11Example}) we find 
\begin{eqnarray*}
 \sum _{n=1}^{\infty } \frac{\left(\frac{1}{2}\right)_n S_{11}(n)}{(n+1)!}&=&
  -4 \text{G}\left(\frac{1}{\tau },\frac{1}{\tau },\frac{1}{\tau },\frac{1}{\tau},\frac{1}{\tau },\frac{1}{\tau },\frac{1}{\tau },\frac{1}{\tau },\frac{1}{\tau},\frac{\sqrt{1-\tau }-1}{\tau };1\right)\\
  &&+2 \text{G}\left(\frac{1}{\tau},\frac{1}{\tau },\frac{1}{\tau },\frac{1}{\tau },\frac{1}{\tau },\frac{1}{\tau},\frac{1}{\tau },\frac{1}{\tau },\frac{1}{\tau },\frac{1}{\tau},\frac{\sqrt{1-\tau }-1}{\tau };1\right)\\
  &=&-\frac{677 \pi ^{10} \mylog{2}}{475200}-\frac{2339 \pi ^8 \mylog{2} ^3}{680400}-\frac{79\pi ^6 \mylog{2}^5}{28350}-\frac{2 \pi ^4 \mylog{2}^7}{1575}-\frac{4 \pi ^2 \mylog{2}^9}{8505}+\frac{16 \mylog{2}^{11}}{155925}\\
  &&-\frac{2339 \pi ^8 \zeta_{3}}{453600}-\frac{79 \pi ^6 \mylog{2}^2 \zeta_{3}}{1890}-\frac{1}{15} \pi ^4 \mylog{2}^4 \zeta_{3}-\frac{8}{135} \pi ^2 \mylog{2}^6 \zeta_{3}+\frac{8}{315} \mylog{2}^8\zeta_{3}\\
  &&-\frac{1}{5} \pi ^4 \mylog{2} \zeta_{3}^2-\frac{8}{9} \pi ^2 \mylog{2}^3 \zeta_{3}^2+\frac{16}{15} \mylog{2}^5 \zeta_{3}^2-\frac{4 \pi ^2 \zeta_{3}^3}{9}+\frac{16}{3} \mylog{2}^2 \zeta_{3}^3-\frac{79 \pi ^6 \zeta_{5}}{1260}\\
  &&-\frac{3}{5} \pi ^4 \mylog{2}^2 \zeta_{5}-\frac{4}{3} \pi ^2 \mylog{2}^4\zeta_{5}+\frac{16}{15} \mylog{2}^6 \zeta_{5}-8 \pi ^2 \mylog{2} \zeta_{3} \zeta_{5}+32 \mylog{2}^3 \zeta_{3} \zeta_{5}+24 \zeta_{3}^2 \zeta_{5}\\
  &&+72 \mylog{2} \zeta_{5}^2-\frac{9 \pi ^4 \zeta_{7}}{10}-12 \pi ^2 \mylog{2}^2 \zeta_{7}+24 \mylog{2}^4\zeta_{7}+144 \mylog{2} \zeta_{3} \zeta_{7}-\frac{170 \pi ^2 \zeta_{9}}{9}\\
  &&+\frac{680}{3} \mylog{2}^2 \zeta_{9}+372 \zeta_{11}.
\end{eqnarray*}

Note that all the identities listed in Section~\ref{sec:2} can also be computed using rewrite rules. But using these rewrite rules turns out to be much more efficient. 
We are now going to list several additional identities that could be computed with the help of this command:
\begin{eqnarray}
\sum _{n=1}^{\infty } \frac{\left(\frac{1}{3}\right)_n S_3(n)}{(n+1)!}&=&-\frac{5\pi^3}{32\sqrt{3}}+\frac{9}{16}\sqrt{3}\pi h_1-\frac{15\pi^2l_3}{32}-\frac{81h_1l_3}{16}-\frac{9}{32}\sqrt{3}\pi l_3^2\nonumber\\
  &&+\frac{27l_3^3}{32}+6\zeta_3,\\
 \sum _{i=1}^{\infty } \frac{\left(\frac{1}{2}\right)_n S_{1,1,1,1,1}(n)}{(n+1)!}&=&60 \zeta_{5},\\
 \sum _{i=1}^{\infty } \frac{\left(\frac{1}{3}\right)_n S_{1,1,1,1,1}(n)}{(n+1)!}&=&180 \zeta_{5}-\frac{\pi ^5}{\sqrt{3}},\\
 \sum _{n=1}^{\infty } \frac{\left(\frac{1}{2}\right)_n S_4(n)}{n (n+1)!}&=&-\frac{\pi ^4}{20}-\frac{2}{3} \pi ^2 l_2^2-\frac{4}{9} \pi ^2 l_2^3+\frac{4 l_2^4}{3}+\frac{8 l_2^5}{15}+16 l_2 p_4+16 p_5\nonumber\\
 &&-\frac{7 \pi ^2 \zeta _3}{12}+8 l_2 \zeta _3+7 l_2^2 \zeta _3-\frac{63 \zeta _5}{8},\\
 \sum _{n=1}^{\infty } \frac{\left(\frac{1}{2}\right)_n S_{3,1}(n)}{(n+1)! n^3}&=&\frac{13 \pi ^4}{180}+\frac{\pi ^6}{189}+\frac{199 \pi ^6 l_2}{7560}-\frac{2}{3} \pi ^2 l_2^2-\frac{4}{27} \pi ^4 l_2^3-\frac{2 l_2^4}{3}+\frac{8}{45} \pi ^2 l_2^5\nonumber\\
 &&-16 p_4+\frac{16}{3} \pi ^2 l_2 p_4+\frac{16 \pi ^2 p_5}{3}+24 s_1+\frac{200 l_2 s_1}{7}+\frac{136 s_2}{7}\nonumber\\
 &&-\frac{200 s_3}{7}-\frac{3 \pi ^2 \zeta _3}{2}+\frac{29 \pi ^4 \zeta _3}{168}+4 l_2 \zeta _3-3 \pi ^2 l_2 \zeta _3+\frac{5}{6} \pi ^2 l_2^2 \zeta _3\nonumber\\
 &&-\frac{3}{2} l_2^4 \zeta _3-36p_4 \zeta _3+\frac{9 \zeta _3^2}{8}-\frac{243}{7} l_2 \zeta _3^2+\frac{75 \zeta _5}{4}-\frac{2935 \pi ^2 \zeta _5}{168}\nonumber\\
 &&-9 l_2 \zeta _5-\frac{111}{2} l_2^2 \zeta _5+\frac{12685 \zeta _7}{112},\\
 \sum _{n=1}^{\infty } \frac{\left(\frac{1}{2}\right)_n S_3(n)}{(n+1)! n^5}&=&\frac{37\pi^4}{360}+\frac{89\pi^6}{5040}-\frac{63031\pi^8}{3024000}+\frac{2}{3}\pi^2\mylog{2}
 +\frac{37}{180}\pi^4\mylog{2}+\frac{463\pi^6\mylog{2}}{7560}\nonumber\\
 &&+\frac{1}{3}\pi^2\mylog{2}^2+\frac{47}{180}\pi^4\mylog{2}^2+\frac{1079\pi^6\mylog{2}^2}{7560}-\frac{8\mylog{2}^3}{3}+\frac{2}{9}\pi^2\mylog{2}^3+\frac{47}{270}\pi^4\mylog{2}^3-\frac{\mylog{2}^4}{3}\nonumber\\
 &&-\frac{5}{18}\pi^2\mylog{2}^4+\frac{19}{180}\pi^4\mylog{2}^4-\frac{2\mylog{2}^5}{15}-\frac{1}{9}\pi^2\mylog{2}^5+\frac{2\mylog{2}^6}{5}-\frac{1}{27}\pi^2\mylog{2}^6+\frac{4\mylog{2}^7}{35}+\frac{\mylog{2}^8}{35}\nonumber\\
 &&-8p_4-\frac{4}{3}\pi^2 p_4+\frac{4}{9}\pi^4p_4+16\mylog{2}^2p_4+16p_5+\frac{8}{3}\pi^2p_5+32\mylog{2}p_5\nonumber\\
 &&-32\mylog{2}^2p_5-\frac{16}{3}\pi^2p_6-128\mylog{2}p_6+64\mylog{2}^2p_6-128p_7+384\mylog{2}p_7\nonumber\\
 &&+1100s_5-\frac{134}{3}\pi^2s_1-32\mylog{2}s_1-104\mylog{2}^2s_1+\frac{6939}{40}s_4-16s_3 \nonumber\\
 &&+32s_2-64\mylog{2}s_2+128s_680s_7-4\zeta_{3}+\frac{7\pi^2\zeta_{3}}{12}-\frac{13\pi^4\zeta_{3}}{60}-7\mylog{2}\zeta_{3}\nonumber\\
 &&-\frac{271}{90}\pi^4\mylog{2}\zeta_{3}-7\mylog{2}^2\zeta_{3}-\frac{20}{9}\pi^2\mylog{2}^3\zeta_{3}+\frac{4}{3}\mylog{2}^5\zeta_{3}-160p_5\zeta_{3}-\frac{43\pi^2\zeta_{3}^2}{2}\nonumber\\
 &&-9\mylog{2}\zeta_{3}^2+32\mylog{2}^2\zeta_{3}^2-\frac{203\zeta_{5}}{8}-\frac{249\pi^2\zeta_{5}}{16}-\frac{203}{4}\mylog{2}\zeta_{5}-\frac{361}{12}\pi^2\mylog{2}\zeta_{5}\nonumber\\
 &&-\frac{203}{4}\mylog{2}^2\zeta_{5}+\frac{201}{2}\mylog{2}^3\zeta_{5}+\frac{393\zeta_{3}\zeta_{5}}{8}+\frac{3955\zeta_{7}}{16}-\frac{11533}{16}\mylog{2}\zeta_{7}\nonumber\\
 &&+640p_8+48\mylog{2}s_3.
\end{eqnarray}

To conclude this section we consider the sum $$\sum _{n=1}^{\infty } \frac{\left(-\frac{1}{2}\right)_n S_{(3,1,1)}(n)}{(n-1)! n}.$$ We find that it equals 
\begin{eqnarray}\label{notcyclo}
\frac{1-2 \text{G}\left(\frac{\sqrt{1-\tau }}{1-\tau ^{1/3}};1\right)+2 \text{G}\left(\frac{\sqrt{1-\tau }}{1+\tau ^{1/3}+\tau ^{2/3}};1\right)-33 \text{G}\left(\sqrt{1-\tau } \tau ^{1/3};1\right)-2
   \text{G}\left(\frac{\sqrt{1-\tau } \tau ^{1/3}}{1+\tau ^{1/3}+\tau ^{2/3}};1\right)}{45}.
\end{eqnarray}
Here we fail to transform the iterated integrals in terms of cyclotomic polylogarithms, however, since the integrals are simple enough, we are able to perform the integrals in~(\ref{notcyclo}) for example by using \texttt{Mathematica} and find the result
\begin{eqnarray*}
\sum _{n=1}^{\infty } \frac{\left(-\frac{1}{2}\right)_n S_{(3,1,1)}(n)}{(n-1)! n}=-\frac{4 \pi ^{3/2}}{27 \sqrt{3} \Gamma \left(\frac{5}{3}\right) \Gamma \left(\frac{11}{6}\right)}.
\end{eqnarray*}
Another example where we fail to transform the iterated integrals in terms of cyclotomic polylogarithms but still can do the integrals is:
\begin{eqnarray*}
\sum _{n=1}^{\infty }  \frac{\left(\frac{1}{3}\right)_n \left(S_{(3,1,1)}(n)-S_{(3,2,1)}(n)\right)}{(n+1)!}&=&-\frac{3}{4}+\frac{\pi }{\sqrt{3}}-\frac{\sqrt{3 \pi } \Gamma \left(\frac{5}{6}\right)}{\sqrt[3]{2} \Gamma \left(\frac{1}{3}\right)}.
\end{eqnarray*}
In the following section we will consider a subclass of Pochhammer sums, for which we will always be able to derive a representation in terms of cyclotomic polylogarithms.

\section{Using Rewrite Rules to directly derive Generating Functions in terms of Cyclotomic Polylogarithms}
\label{sec:4}
In this section we will deal with a sub class of the Pochhammer sums, namely we restrict the inner sum to be a multiple harmonic sum and we set $p=1/q$ with $q\in\Z\setminus\{0\}$ and $a=1$ in~(\ref{Pochhammersum}) \ie we are considering sums of the form 
\begin{eqnarray}\label{PochhammersumsH}
   \sum_{n=1}^\infty \frac{(p)_n}{(n+b)^c(n+d)!}\S{c_1,c_2,\ldots,c_r}n,
\end{eqnarray}
where $c,c_i\in\N,\ b,d\in\Z$ and $p=\frac{1}{q}$ with $q\in\Z\setminus\{0\}.$
Considering a Pochhammer sum in this subclass we could again use the rewrite rules presented in Section~\ref{sec:3} to find an integral representation, however we can also use the following lemmas. 
These new rewrite rules will directly lead to cyclotomic polylogarithms. 
We again start with the base cases where no inner sum is present (compare Lemma~\ref{GLBaseCase}):

\begin{lemma}\label{lemma:GenFunPochhammerSumToH1} Let $\K$ be a field of characteristic 0. Then the following identities hold in the ring $\K[[x]]$ of formal power series with $c\in\N$ and $b,d\in\Z$:
 \begin{eqnarray}
   \sum_{n=1}^\infty x^n\frac{(p)_n}{(n+d)!} &=& -\frac{(1-x)^{d-p}x^{-d}}{\abs{p}\, d!}\int_1^{(1-x)^{\abs{p}}} \frac{(1-t^{\frac{1}{\abs{p}}})^{d}}{t^{1-\sign{p}}\;t^{\frac{d}{\abs{p}}}} dt,\ d>0, \label{eq:GenFunPochhammerSumToH03}\\
   \sum_{n=1}^\infty x^n\frac{(p)_n}{(n+b)^c(n+d)!} &=& \frac{-1}{\abs{p}\, x^{b}}\int_1^{(1-x)^{\abs{p}}}\frac{\left(1-t^{\frac{1}{\abs{p}}}\right)^{b-1}}{t^{1-\frac{1}{\abs{p}}}}\sum_{n=1}^\infty \frac{(p)_n\left(1-t^{\frac{1}{\abs{p}}}\right)^n}{(n+b)^{c-1}(n+d)!}dt.\label{eq:GenFunPochhammerSumToH04}
 \end{eqnarray}
\end{lemma}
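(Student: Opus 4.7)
The plan is to derive both identities by performing a single change of variables, namely $u = (1-t)^{\abs{p}}$, in the integral representations~\eqref{eq:GenFunPochhammerSum03} and~\eqref{eq:GenFunPochhammerSum04} of Lemma~\ref{GLBaseCase}. Since the base-case identities already hold in $\K[[x]]$ and the proposed substitution is bijective for $x$ in a neighbourhood of $0$, it suffices to check that the right-hand sides transform into the claimed form.

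For identity~\eqref{eq:GenFunPochhammerSumToH03} I would start from $\sum_{n=1}^\infty x^n \frac{(p)_n}{(n+d)!} = (1-x)^{d-p}\frac{p}{d!\,x^d}\int_0^x (1-t)^{p-d-1} t^d\,dt$, valid for $d>0$. Setting $t = 1 - u^{1/\abs{p}}$ gives $dt = -\frac{1}{\abs{p}}u^{1/\abs{p}-1}du$ and transforms the limits of integration into $u:1\to (1-x)^{\abs{p}}$. The integrand factor $(1-t)^{p-d-1}t^d$ becomes $u^{(p-d-1)/\abs{p}}(1-u^{1/\abs{p}})^d$; collecting all powers of $u$ and using $p/\abs{p}=\sign{p}$ gives the combined exponent $(p-d)/\abs{p}-1 = -(1-\sign{p})-d/\abs{p}$, so the integrand rewrites as $-\frac{1}{\abs{p}}\cdot\frac{(1-u^{1/\abs{p}})^d}{u^{1-\sign{p}}\,u^{d/\abs{p}}}$. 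Combining with the external prefactor then recovers the right-hand side of~\eqref{eq:GenFunPochhammerSumToH03}.

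For identity~\eqref{eq:GenFunPochhammerSumToH04} I would apply the same substitution to the outer integral in~\eqref{eq:GenFunPochhammerSum04} specialised to $a=1$, i.e.\ $\sum_{n=1}^\infty x^n\frac{(p)_n}{(n+b)^c(n+d)!} = x^{-b}\int_0^x t^{b-1}\sum_{n=1}^\infty t^n\frac{(p)_n}{(n+b)^{c-1}(n+d)!}\,dt$. The factor $t^{b-1}$ transforms into $(1-u^{1/\abs{p}})^{b-1}$, the differential supplies $-\frac{1}{\abs{p}}u^{1/\abs{p}-1}du = -\frac{1}{\abs{p}}u^{-(1-1/\abs{p})}du$, and—this is the only feature that genuinely distinguishes this identity from the first—the $t^n$ appearing under the inner sum becomes $(1-u^{1/\abs{p}})^n$, producing the inner series on the right-hand side of~\eqref{eq:GenFunPochhammerSumToH04}.

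The argument is fundamentally a bookkeeping exercise: each identity reduces to a single bijective change of variables in a previously established integral representation. The places where I would take particular care are (i) the reversal of the limits, which is what produces the overall negative coefficient in both new identities; (ii) the use of $p/\abs{p}=\sign{p}$ to split the $u$-exponent, which is what causes $t^{1-\sign{p}}$ and $t^{1-1/\abs{p}}$ to appear in the denominators of the transformed integrands; and (iii) verifying that the transformed integrand is integrable near the endpoint $u=1$ (corresponding to $t=0$), so that the resulting equation is meaningful as an identity in $\K[[x]]$ for $x$ in a neighbourhood of $0$.
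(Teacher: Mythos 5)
Your overall strategy---pulling the integral representations of Lemma~\ref{GLBaseCase} back through the substitution $t=1-u^{1/\abs{p}}$ (equivalently $u=(1-t)^{\abs{p}}$)---is sound, and it is visibly the computation that produces the shape of these right-hand sides. The paper in fact states this lemma without any proof, and justifies its companion, Lemma~\ref{lemma:GenFunPochhammerSumToH2}, by an initial-value-problem argument instead; your substitution route is a legitimate and more explicit alternative (though if one insists on working in $\K[[x]]$ for general $\K$, the clean way to close the argument is still to verify that both sides solve the same first-order initial value problem, exactly as the paper does elsewhere). Your treatment of \eqref{eq:GenFunPochhammerSumToH04} is correct: starting from \eqref{eq:GenFunPochhammerSum04} with $a=1$, the external prefactor is $\tfrac1a=1$, so the only new constant is the Jacobian factor $-\tfrac{1}{\abs{p}}$, which is precisely what the target displays.

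The gap is in the last sentence of your argument for \eqref{eq:GenFunPochhammerSumToH03}. Your bookkeeping is right up to that point: the substitution turns $\int_0^x(1-t)^{p-d-1}t^d\,dt$ into $-\frac{1}{\abs{p}}\int_1^{(1-x)^{\abs{p}}}\frac{(1-u^{1/\abs{p}})^d}{u^{1-\sign{p}}u^{d/\abs{p}}}\,du$. But the external prefactor in \eqref{eq:GenFunPochhammerSum03} is $(1-x)^{d-p}\frac{p}{d!\,x^d}$, and $p\cdot\bigl(-\frac{1}{\abs{p}}\bigr)=-\sign{p}$, which is \emph{not} the $-\frac{1}{\abs{p}}$ standing in front of \eqref{eq:GenFunPochhammerSumToH03}; the two agree only for $p=1$. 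What your computation actually yields is
\[
\sum_{n=1}^\infty x^n\frac{(p)_n}{(n+d)!}=-\frac{\sign{p}\,(1-x)^{d-p}x^{-d}}{d!}\int_1^{(1-x)^{\abs{p}}}\frac{\bigl(1-t^{1/\abs{p}}\bigr)^d}{t^{1-\sign{p}}\,t^{d/\abs{p}}}\,dt,
\]
which differs from the printed identity by a factor of $p$. A direct check confirms your integrand-level computation and not the printed constant: for $p=2$, $d=1$ one has $(2)_n=(n+1)!$, so the sum is $\frac{x}{1-x}$, whereas the right-hand side of \eqref{eq:GenFunPochhammerSumToH03} as printed evaluates to $\frac{x}{2(1-x)}$. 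So the one step you assert rather than compute---``combining with the external prefactor then recovers the right-hand side''---is exactly the step that fails; you should either carry the prefactor through honestly and flag the factor-of-$p$ discrepancy as an error in the statement, or explain why it is absent. (Your point (iii) on integrability at $u=1$ is also left unchecked, but it is harmless here: the factors $(1-u^{1/\abs{p}})^{d}$ with $d>0$, respectively $(1-u^{1/\abs{p}})^{b-1+n}$ with $n\ge1$, tame the endpoint in the same parameter ranges in which the base identities of Lemma~\ref{GLBaseCase} hold.)
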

In the cases where there is an inner multiple harmonic sum present we can refine the Lemmas~\ref{GLc0dl0},~\ref{GLc0dgeq0} and~\ref{GLcgeq1} and get the following result.
\begin{lemma}\label{lemma:GenFunPochhammerSumToH2} Let $\K$ be a field of characteristic 0 and let $f:\N \to \K.$ Then the following identities hold in the ring $\K[[x]]$ of formal power series with $c\in\N,\ b,d\in\Z$ and $\S{m_1,\ldots,m_r}n$ a multiple harmonic sum:
 \begin{eqnarray}
   &&c=0,\ d<0:\nonumber\\
   &&\sum_{n=1}^\infty x^n\frac{(p)_n}{(n+d)!} \S{m_1,\ldots,m_r}n=\label{eq:GenFunPochhammerSumToH05}\\
   &&\frac{(1-x)^{d-p}}{x^{d}}\left((p)_{-d}s(-d)-\frac{1}{\abs{p}}\int_1^{(1-x)^{\abs{p}}} \frac{\left(1-t^{\frac{1}{\abs{p}}}\right)^{d-1}}{t^{1-\sign{p}}\;t^{\frac{d}{\abs{p}}}}\sum_{n=1}^\infty\frac{\left(1-t^{\frac{1}{\abs{p}}}\right)^n (p)_n}{(n+d-1)!n^{m_1}}\bar{s}(n) dt\right); \nonumber\\ 
   &&c=0,\ d\geq0:\nonumber\\
   &&\sum_{n=1}^\infty x^n\frac{(p)_n}{(n+d)!} \S{m_1,\ldots,m_r}n= \label{eq:GenFunPochhammerSumToH06}\\
   &&-\frac{(1-x)^{d-p}x^{-d}}{\abs{p}}\int_1^{(1-x)^{\abs{p}}} \frac{\left(1-t^{\frac{1}{\abs{p}}}\right)^{d-1}}{t^{1-\sign{p}}\;t^{\frac{d}{\abs{p}}}}\sum_{n=1}^\infty\frac{\left(1-t^{\frac{1}{\abs{p}}}\right)^n(p)_n}{(n+d-1)!n^{m_1}}\bar{s}(n) dt; \nonumber\\  
   &&c>0:\nonumber\\
   &&\sum_{n=1}^\infty x^n\frac{(p)_n}{(n+b)^c(n+d)!} \S{m_1,\ldots,m_r}n=\label{eq:GenFunPochhammerSumToH07}\\
   &&-\frac{x^{-b}}{\abs{p}}\int_1^{(1-x)^{\abs{p}}}t^{\frac{1}{\abs{p}}-1}\left(1-t^{\frac{1}{\abs{p}}}\right)^{b-1}\sum_{n=1}^\infty \left(1-t^{\frac{1}{\abs{p}}}\right)^n \frac{(p)_n}{(n+b)^{c-1}(n+d)!}s(n)dt.\nonumber
 \end{eqnarray}
 Here we use the abbreviations $s(n):=\S{m_1,\ldots,m_r}n$ and $\bar{s}(n):=\S{m_2,\ldots,m_r}n.$
\end{lemma}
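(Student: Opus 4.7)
The plan is to reduce all three identities to the earlier Lemmas~\ref{GLc0dl0},~\ref{GLc0dgeq0}, and~\ref{GLcgeq1} by a single change of variable, rather than re-prove each from scratch via a separate initial value problem. The key observation is the recursive identity
$$\S{m_1,\ldots,m_r}n \;=\; \sum_{i=1}^n \frac{\bar{s}(i)}{i^{m_1}},$$
which lets me instantiate $f(i):=\bar{s}(i)/i^{m_1}$ in each of the three earlier lemmas. In the cases $c=0$, Lemmas~\ref{GLc0dl0} and~\ref{GLc0dgeq0} then produce an integral on $(0,x)$ whose integrand contains exactly $\sum_n t^n (p)_n/((n+d-1)!\,n^{m_1})\,\bar{s}(n)$, matching the inner sums of~(\ref{eq:GenFunPochhammerSumToH05}) and~(\ref{eq:GenFunPochhammerSumToH06}); in the case $c>0$, Lemma~\ref{GLcgeq1} keeps the inner nested sum intact as $s(n)$, as required by~(\ref{eq:GenFunPochhammerSumToH07}).

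Second, I would perform the substitution $u=(1-t)^{\abs{p}}$, equivalently $t = 1-u^{1/\abs{p}}$ and $dt = -\frac{1}{\abs{p}}\,u^{1/\abs{p}-1}\,du$. Under this substitution the limits $(0,x)$ map to $(1,(1-x)^{\abs{p}})$, each factor $(1-t)^\alpha$ becomes $u^{\alpha/\abs{p}}$, each factor $t^\beta$ becomes $(1-u^{1/\abs{p}})^\beta$, and a power series $\sum_n t^n a_n$ becomes $\sum_n(1-u^{1/\abs{p}})^n a_n$. The powers of $u$ generated by $(1-t)^{p-d-1}\,dt$ in the $c=0$ cases and by $t^{b-1}\,dt$ in the $c>0$ case collapse via the sign-uniform identity
$$\frac{p-d-1}{\abs{p}}+\frac{1}{\abs{p}}-1 \;=\; -\frac{d}{\abs{p}}-\bigl(1-\sign{p}\bigr),$$
which reproduces exactly the denominator $t^{1-\sign{p}}\,t^{d/\abs{p}}$ appearing in the claimed formulas: for $p>0$ one has $t^{1-1}=1$, while for $p<0$ one has $t^{1-(-1)}=t^2$.

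Matching the remaining prefactors and boundary terms is then routine. The constant term $(p)_{-d}\,s(-d)$ in~(\ref{eq:GenFunPochhammerSumToH05}) comes directly from the boundary term of Lemma~\ref{GLc0dl0} together with the identity $\sum_{i=1}^{-d}\bar{s}(i)/i^{m_1}=s(-d)$; the overall scalar $-1/\abs{p}$ in front of each new integral is the product of the Jacobian $-1/\abs{p}$ in $dt/du$ and the orientation reversal built into the new limits $1\to(1-x)^{\abs{p}}$, since $(1-x)^{\abs{p}}<1$ on the relevant interval.

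The main obstacle is purely bookkeeping: tracking signs and exponents uniformly across $p>0$ and $p<0$ and checking that the three cases fit together under the same substitution, with particular attention to the nonintegral exponent $1/\abs{p}$, which is handled formally. There are no genuine convergence subtleties, since the identities hold in the formal power series ring $\K[[x]]$, and integrability of the new integrand near $u=1$ (equivalently $t=0$) is inherited from the earlier lemmas. As a fallback, one can mimic the strategy of Lemmas~\ref{GLc0dl0}--\ref{GLcgeq1} and verify directly that both sides of each identity satisfy the same first-order linear initial value problem at $x=0$, avoiding the change of variable at the cost of a more opaque forcing term.
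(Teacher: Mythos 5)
Your proof is correct, but it takes a genuinely different route from the paper's. The paper disposes of Lemma~\ref{lemma:GenFunPochhammerSumToH2} with the same one-line device it uses for Lemmas~\ref{GLc0dl0}--\ref{GLcgeq1}: both sides of each identity satisfy a common first-order linear initial value problem with a unique solution near $x=0$ (the ODEs are not even displayed in this case) --- which is exactly the fallback you mention at the end. Your primary argument instead \emph{derives} the three identities from Lemmas~\ref{GLc0dl0},~\ref{GLc0dgeq0} and~\ref{GLcgeq1} (the last with $a=1$) by taking $f(i)=\bar{s}(i)/i^{m_1}$, so that $\sum_{i=1}^{n}f(i)=s(n)$ and $f(n)=\bar{s}(n)/n^{m_1}$, and then substituting $u=(1-t)^{\abs{p}}$. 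The bookkeeping checks out: your exponent identity $\frac{p-d-1}{\abs{p}}+\frac{1}{\abs{p}}-1=\sign{p}-1-\frac{d}{\abs{p}}$ reproduces the factor $t^{-(1-\sign{p})}\,t^{-d/\abs{p}}$ in \eqref{eq:GenFunPochhammerSumToH05} and \eqref{eq:GenFunPochhammerSumToH06}, the boundary term $(p)_{-d}s(-d)$ is inherited from Lemma~\ref{GLc0dl0}, and \eqref{eq:GenFunPochhammerSumToH07} is the transformed $a=1$ instance of Lemma~\ref{GLcgeq1}. Your route is more informative: it exhibits the new representations as literally the old ones after a single change of variable, which explains where the limits $1\to(1-x)^{\abs{p}}$ and the letters $t^{1/\abs{p}}$ come from and why, for $p=1/q$, the integrands become the cyclotomic letters needed for the structural theorem that follows; the cost is dependence on the earlier lemmas, whereas the paper's IVP verification is uniform and self-contained but gives no hint of this structure. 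Two small remarks: the factor $-1/\abs{p}$ is already the entire Jacobian $dt/du$ once the limits are written in the order $1\to(1-x)^{\abs{p}}$, so there is no additional sign from ``orientation reversal'' --- take care not to count it twice; and the substitution with fractional exponents should indeed be read formally (coefficientwise in $x$), consistently with how the paper itself manipulates these integrals.
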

\begin{proof}
 For all these equalities it is possible to find an initial value problem, which has a unique solution near $x=0$ and is satisfied by both sides of the respective equation.
\end{proof}
Note that the polynomials arising in the left hand sides of the equations in  Lemma~\ref{lemma:GenFunPochhammerSumToH1} and Lemma~\ref{lemma:GenFunPochhammerSumToH2} are of the form $t^i$ or $(1-t^i)^k$ for $i,k\in\Z,$ 
hence integrating over these integrands will lead to cyclotomic polylogarithms. Therefore the Pochhammer sums of the form~(\ref{PochhammersumsH}) will be expressible in terms of cyclotomic polylogarithms, and we can state the following structural theorem.
\begin{theorem}
 Any sum of the form 
 \begin{eqnarray}
   \sum_{n=1}^\infty \frac{\left(\frac{1}q\right)_n}{(n+b)^c(n+d)!}\S{c_1,c_2,\ldots,c_r}n,
\end{eqnarray}
where $c,c_i\in\N,\ b,d\in\Z$ and $q\in\Z\setminus\{0\},$ can be expressed in terms of cyclotomic polylogarithms at one.
\end{theorem}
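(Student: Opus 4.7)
The plan is a double induction using the rewrite rules of Lemmas~\ref{lemma:GenFunPochhammerSumToH1} and~\ref{lemma:GenFunPochhammerSumToH2} to reduce each Pochhammer sum's generating function until a closed-form integral is reached. Take the primary induction parameter to be the depth $r$ of $\S{c_1,\ldots,c_r}n$ and the secondary to be the exponent $c$. For the base case $r=0$ (no inner sum), equation~\eqref{eq:GenFunPochhammerSumToH04} reduces $c$ to $c-1$; once $c=0$, equation~\eqref{eq:GenFunPochhammerSumToH03} gives the generating function directly as an integral of a power of $t$ times a power of $1-t^{1/\abs{p}}$ (the case $d\leq 0$ is handled by the closed forms of Lemma~\ref{GLBaseCase}). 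For $r>0$, equation~\eqref{eq:GenFunPochhammerSumToH07} reduces $c$ while preserving $r$; once $c=0$, equation~\eqref{eq:GenFunPochhammerSumToH05} or~\eqref{eq:GenFunPochhammerSumToH06} peels off the outermost index of $\S{}$ and produces an integrand whose inner sum now has depth $r-1$. The denominator factor $n^{m_1}$ generated in this step is absorbed into the $c$-slot with new $b=0$, which is legal because the primary parameter $r$ has strictly decreased.

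Next I would verify that every integral produced along the way is, after a single change of variable, an iterated integral over the cyclotomic alphabet $\{f_a^b\}$. The outer integrand in each rewrite rule has the shape $t^{\alpha}(1-t^{1/\abs{p}})^{\beta}F(1-t^{1/\abs{p}})$, where $F$ is by induction a linear combination of cyclotomic polylogarithms in its argument. Substituting $u=t^{1/\abs{p}}$ and then $v=1-u$ turns the outer differential into $v^{\beta}(1-v)^{\abs{p}-1}\,dv$ up to a monomial factor. Since $1-v^k$ for integer $k$ factors over $\mathbb Q$ as the product of cyclotomic polynomials $\Phi_a(v)$ with $a\mid k$, a partial-fraction expansion in $v$ leaves only letters of the form $v^b/\Phi_a(v)$ and $v^{-1}$, which are precisely the $f_a^b$ defining cyclotomic polylogarithms. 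Integration against these letters thus stays within the cyclotomic class, and the specialization $x\to 1$ sends the upper limit of each outer integral to $1$ in the $v$-variable, yielding the values $\H{m_1,\ldots,m_k}{1}$.

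The main obstacle I anticipate is the bookkeeping at the $x=1$ endpoint. The rewrite rules introduce apparent non-integrable singularities at $t=1$ (equivalently $v=0$), which is the base point of the cyclotomic polylogarithms, so one must verify that the nested regularizations built into the $\GL{\cdot,x}$ and $\H{\cdot}{x}$ definitions cancel coherently across all levels of the induction, so that the $x\to 1$ limit truly lies in the $\mathbb Q$-span of the values $\H{m_1,\ldots,m_k}{1}$ rather than yielding a divergent combination. A secondary subtlety is confirming that each rewrite step keeps us inside the class~\eqref{PochhammersumsH}: the shifts $d\mapsto d-1$ and $c\mapsto m_1$ with $b\mapsto 0$ must both be admissible at the next induction step, which is precisely why the primary induction is carried out on $r$ rather than on $c$ or $d$.
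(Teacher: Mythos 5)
Your overall strategy coincides with the paper's: the theorem is justified there by exactly the observation that iterating Lemmas~\ref{lemma:GenFunPochhammerSumToH1} and~\ref{lemma:GenFunPochhammerSumToH2} produces nested integrands built from $t^i$ and $(1-t^i)^k$ with $i,k\in\Z$, whose denominators factor into cyclotomic polynomials, so that partial fractions leaves only the letters $f_a^b$. Your double induction on $(r,c)$, the absorption of the factor $n^{m_1}$ into the $c$-slot with $b=0$, and the concern about regularization at the endpoint are all sound elaborations of what the paper states in a single sentence.

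There is, however, one step that fails as written: the change of variables $u=t^{1/\abs{p}}$, $v=1-u$. Since $p=\tfrac{1}{q}$, you have $\abs{p}=\tfrac{1}{\abs{q}}$, so $u=t^{\abs{q}}$ and $dt=\tfrac{1}{\abs{q}}u^{1/\abs{q}-1}\,du$; likewise your claimed outer differential $v^{\beta}(1-v)^{\abs{p}-1}\,dv=v^{\beta}(1-v)^{1/\abs{q}-1}\,dv$ carries a \emph{fractional} exponent whenever $\abs{q}>1$. These are not cyclotomic letters, so the partial-fraction argument you invoke would not apply after this substitution. The repair is simply to omit the substitution: because $\tfrac{1}{\abs{p}}=\abs{q}\in\N$ and $\tfrac{d}{\abs{p}}=d\abs{q}\in\Z$, the integrands of \eqref{eq:GenFunPochhammerSumToH03}--\eqref{eq:GenFunPochhammerSumToH07} are already rational in $t$ with denominators dividing powers of $t\,(t^{\abs{q}}-1)=t\prod_{a\mid \abs{q}}\Phi_a(t)$, and applying the next rewrite rule to the inner generating function at argument $y=1-t^{\abs{q}}$ yields an integral whose upper limit is $(1-y)^{1/\abs{q}}=t$, so the recursion directly builds an iterated integral in the single variable $t$ from $1$ to $(1-x)^{1/\abs{q}}$ over cyclotomic letters; sending $x\to1$ moves the endpoint to $0$ and path reversal gives cyclotomic polylogarithms at one. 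Note also that it is precisely the restriction $p=\tfrac1q$ that makes these exponents integral; for general $p$ the same lemmas produce genuinely root-valued letters, which is why Section~\ref{sec:3} cannot guarantee cyclotomic output.
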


Let us now, for the third time, consider~(\ref{GeneralExampleSum}) and illustrate how the previous lemmas can be used as rewrite rules to directly find a representation in terms of cyclotomic polylogarithms.
\begin{example}
We seek a closed form representation in terms of cyclotomic polylogarithms of 
$$\sum _{n=1}^{\infty } x^n \frac{\left(-\frac{1}{2}\right)_nS_1(n) }{(3+n)^2 (n-1)!},$$
so we use~(\ref{eq:GenFunPochhammerSumToH07}) twice:
\begin{eqnarray*}
  &&\sum _{n=1}^{\infty } x^n\frac{\left(-\frac{1}{2}\right)_nS_1(n) }{(3+n)^2 (n-1)!}=\\
  &&\hspace{2cm}=-\frac{2}{x^3} \int_1^{\sqrt{1-x}} t \left(1-t^2\right)^2 \sum _{n=1}^{\infty } \frac{\left(1-t^2\right)^n \left(-\frac{1}{2}\right)_n S_1(n)}{(3+n) (n-1)!} \, dt\\
  &&\hspace{2cm}=\frac{4}{x^3} \int_1^{\sqrt{1-x}} \frac{t}{1-t^2} \int_1^t u \left(1-u^2\right)^2 \sum _{n=1}^{\infty } \frac{\left(1-u^2\right)^n \left(-\frac{1}{2}\right)_n S_1(n)}{(n-1)!} \, du \, dt.
\end{eqnarray*}
Now we apply~(\ref{eq:GenFunPochhammerSumToH05}) followed by applying~(\ref{eq:GenFunPochhammerSumToH04}) and~(\ref{eq:GenFunPochhammerSum01}):
\begin{eqnarray*}
  &&\sum _{n=1}^{\infty } x^n\frac{\left(-\frac{1}{2}\right)_nS_1(n) }{(3+n)^2 (n-1)!}\\
  &&\hspace{1cm}=-\frac{4}{x^3} \int_1^{\sqrt{1-x}} \frac{t}{1-t^2} \int_1^t \left(1-u^2\right)^3 \left(2 \int_1^u \frac{\sum\limits_{n=1}^{\infty } \frac{\left(1-v^2\right)^n \left(-\frac{1}{2}\right)_n}{n (n-2)!}}{\left(1-v^2\right)^2} \,
   dv+\frac{1}{2}\right) \, du \, dt\\
  &&\hspace{1cm}=\frac{4}{x^3} \int_1^{\sqrt{1-x}} \frac{t}{1-t^2}  \int_1^t \left(1-u^2\right)^3 \left(\int_1^u \frac{\int_1^v \frac{4w \sum\limits _{n=1}^{\infty } \frac{\left(1-w^2\right)^n \left(-\frac{1}{2}\right)_n}{(n-2)!}}{1-w^2} \,
   dw}{\left(1-v^2\right)^2} \, dv-\frac{1}{2}\right) \, du\, dt\\
  &&\hspace{1cm}=\frac{4}{x^3} \int_1^{\sqrt{1-x}} \frac{t}{1-t^2} \int_1^t \left(1-u^2\right)^3 \left(\int_1^u \frac{\int_1^v \frac{w^2-1}{w^2} \, dw}{\left(1-v^2\right)^2} \, dv-\frac{1}{2}\right) \, du \, dt.
\end{eqnarray*}
Now we can send $x\to1$ and rewrite this expression directly in terms of cyclotomic harmonic polylogarithms (again this can be done by hand or by using the command \texttt{GLIntegrate} of \HarmonicSumsP) and arrive again at
\begin{eqnarray}\label{ExpampleHRep}
-\frac{9367}{7350}-\frac{64 H_{(2,0)}(1)}{3675}-\frac{32}{35} H_{(0,0),(1,0)}(1)-\frac{32}{35} H_{(2,0),(0,0)}(1)+\frac{64}{35} H_{(2,0),(1,0)}(1).
\end{eqnarray}
Finally, we can again use relations between cyclotomic polylogarithms at one to derive~(\ref{ExampleResult}).
\end{example}

Note that this is implemented in the command \texttt{PochhammerSumToH}, so calling
\begin{eqnarray*}
\textbf{PochhammerSumToH}\left[\frac{\left(-\frac{1}{2}\right)_n S_1(n)}{(3+n)^2 (-1+n)!},x,\{n,1,\infty \}\right]/.x\to1
\end{eqnarray*}
will immediately give~(\ref{ExpampleHRep}).

To conclude we are going to list several identities that could be computed with the help of this command (note that these identities could have also be computed using the methods presented in the previous sections):
\begin{eqnarray}
\sum _{n=1}^{\infty } \frac{\left(\frac{1}{5}\right)_n S_1(n)}{(n+1)!}&=&\frac{25 h_6}{4},\\
\sum _{n=1}^{\infty } \frac{\left(\frac{1}{5}\right)_n S_2(n)}{(n+1)!}&=&\frac{875 h_6^2}{48}+\frac{125}{12} \sqrt{5} h_6^2+\frac{125 h_7}{16}+\frac{125 h_8}{8},\\
\sum _{n=1}^{\infty } \frac{\left(\frac{1}{2}\right)_n S_{2,2,2}(n)}{(n+1)!}&=&\frac{2 \pi ^6}{189}-\frac{9 \zeta _3^2}{4}-\frac{15 l_2 \zeta _5}{2},\\
\sum _{n=1}^{\infty } \frac{\left(\frac{1}{2}\right)_n S_{2,2}(n)}{(n+2)!}&=&\frac{2 \pi ^2}{9}+\frac{\pi ^4}{45}-\frac{8 l_2^2}{3}-\zeta _3-2 l_2 \zeta _3,\\
\sum _{n=1}^{\infty } \frac{\left(\frac{1}{2}\right)_n S_{2,2}(n)}{n (n+2)!}&=&-\frac{\pi ^2}{9}-\frac{2 \pi ^4}{45}+\frac{4 l_2^2}{3}+\frac{\zeta _3}{2}+4 l_2 \zeta _3+\frac{15 \zeta _5}{16},\\
\sum _{n=1}^{\infty } \frac{\left(\frac{1}{3}\right)_n S_{2,2}(n)}{(n+3)!}&=&\frac{9}{320}+\frac{3 \sqrt{3} \pi }{320}+\frac{51 \pi ^2}{512}+\frac{11 \pi ^3}{128 \sqrt{3}}+\frac{91 \pi ^4}{5760}+\frac{1377 h_1}{1280}\nonumber\\
&&-\frac{27}{128} \sqrt{3} \pi  h_1-\frac{135 \pi ^2 h_1}{128}+\frac{243h_2}{64}+\frac{27}{64} \sqrt{3} \pi  h_2+\frac{1539 h_3}{128}\nonumber\\
&&-\frac{243 h_4}{16}-\frac{27 l_3}{320}+\frac{153 \sqrt{3} \pi  l_3}{1280}+\frac{11 \pi ^3 l_3}{128 \sqrt{3}}-\frac{27}{128} \sqrt{3} \pi  h_1 l_3\nonumber\\
&&+\frac{243 h_2l_3}{64}-\frac{1377 l_3^2}{2560}-\frac{81 \zeta _3}{64}+\frac{39}{64} \sqrt{3} \pi  \zeta _3-\frac{81 l_3 \zeta _3}{64}.
\end{eqnarray}

\subsection*{Acknowledgements}
The author would like to thank C. Schneider for useful discussions.

\end{document}